\newtheorem{lemma}{Lemma}
\newtheorem{thm}{Theorem}
\newtheorem{corollary}{Corollary}
\newtheorem{remark}{Remark}
\numberwithin{equation}{section}
\title[]
{Certain Approximation Results for Kantorovich Exponential Sampling Series} 
\keywords{Kantorovich exponential sampling series, Inverse approximation, Saturation order, Mellin derivative.}
\subjclass[2010] { 41A35; 30D10; 94A20; 41A25} 
\author{Shivam Bajpeyi}
\address{School of Mathematics, Indian Institute of Science Education and Research, Thiruvananthapuram, India.}
\email{shivambajpai1010@gmail.com}
\author{A. Sathish Kumar}
\address{Department of Mathematics, Indian Institute of Technology Madras, Chennai-600036, India}
\email{sathishkumar@iitm.ac.in, mathsatish9@gmail.com}
\author{P. Devaraj}
\address{School of Mathematics, Indian Institute of Science Education and Research, Thiruvananthapuram, India.}
\email{devarajp@iisertvm.ac.in}
\begin{document}

\begin{abstract}
In this paper, we study a strong inverse approximation theorem and saturation order for the family of Kantorovich exponential sampling operators. The class of log-uniformly continuous and bounded functions, and class of log-H\"{o}lderian functions are considered to derive these results. We also prove some auxiliary results including Voronovskaya type theorem, and a relation between the Kantorovich exponential sampling series and the generalized exponential sampling series, to achieve the desired plan. Moreover, some examples of kernels satisfying the conditions, which are assumed in the hypotheses of our theorems, are discussed. 
\end{abstract}

\maketitle

\section{Introduction}
The problem of sampling and reconstruction of functions is a fundamental aspect of approximation theory, with important applications in signal analysis and image processing (\cite{apl1,apl2}). A significant breakthrough in sampling and reconstruction theory was collectively achieved by Whittaker-Kotelnikov-Shannon. They established that any band-limited signal $f$, i.e. the Fourier transform of $f$ is compactly supported, can be completely recovered using its regularly spaced sample values (see \cite{srvbut}). This result is widely known as \textit{WKS sampling theorem}. Butzer and Stens \cite{but1} generalized this result significantly for not-necessarily band-limited signals. Since then, several mathematicians have been making significant advancements in this direction, see \cite{butzer2,k2007,tam,in3,tun1}. 
\par
The problem of approximating functions with their exponentially-spaced sample values can be traced back to the work of Ostrowski et.al. \cite{ostrowsky}, Bertero and Pike \cite{bertero}, and Gori \cite{gori}. In order to deal with exponentially-spaced data, they provided a series representation for the class of Mellin band-limited functions (defined in Section \ref{section2}). This reconstruction formula is referred as the \textit{exponential sampling formula} and defined as follows. For $f:\mathbb{R}^{+} \rightarrow \mathbb{C}$ and $c \in \mathbb{R},$ the exponential sampling formula is given by (see \cite{butzer3})
\begin{equation} \label{expformula}
(E_{c,T}f)(x):= \sum_{k=-\infty}^{\infty} lin_{\frac{c}{T}}(e^{-k}x^{T}) f(e^{\frac{k}{T}})
\end{equation}
where $lin_{c}(x)= \dfrac{x^{-c}}{2\pi i} \dfrac{x^{\pi i}-x^{-\pi i}}{\log c} = x^{-c} sinc(\log x)$ with continuous extension $lin_{c}(1)=1.$ Moreover, if $f$ is Mellin band-limited to $[-T,T],$ then $(E_{c,T}f)(x)=f(x)$ for each $x \in \mathbb{R}^{+}.$
\par 
The exponentially spaced data can be observed in various problems emerging in optical physics and engineering, for example Fraunhofer diffraction, polydispersity
analysis by photon correlation spectroscopy, neuron scattering, radio
astronomy etc (see \cite{casasent,ostrowsky,bertero,gori}). Therefore, it became crucial to examine the extensions and variations of the exponential sampling formula \eqref{expformula}.
Butzer and Jansche \cite{butzer5} investigated into the exponential sampling formula, incorporating the analytical tools of Mellin analysis. They established that the theory of Mellin transform provides a suitable framework to handle sampling and approximation problem related to exponentially-spaced data. The foundational work on the Mellin transform theory was initially undertaken by Mamedov \cite{mamedeo}. Subsequently, Butzer and his colleagues made significant contributions to the field of Mellin theory in \cite{butzer3,butzer5}. For some notable developments on Mellin theory, we refer to \cite{bardaro1,bardaro9,bardaro2,bardaro3} etc. In order to approximate a function which is not necessarily Mellin band-limited, the theory of exponential sampling formula \eqref{expformula} was extended in \cite{bardaro7} using generalized kernel satisfying suitable conditions. This gives a method to approximate the class of log-continuous functions by employing its exponentially spaced sample values. For $x \in \mathbb{R}^{+}$ and $w>0,$ the generalized exponential sampling series is given by (see \cite{bardaro7})
\begin{equation} \label{genexp}
(S_{w}^{\chi}f)(x)= \sum_{k=- \infty}^{\infty} \chi(e^{-k} x^{w}) f( e^{\frac{k}{w}})
\end{equation}
for any $ f: \mathbb{R}^{+} \rightarrow \mathbb{R}$ such that the series \eqref{genexp} converges absolutely. Various approximation properties associated with the family of operators \eqref{genexp} can be observed in \cite{comboexp,bardaro11,bevi,diskant}. The approximation properties of exponential sampling operators based on artificial neural network can be found in \cite{sn,self}. In order to approximate integrable functions, the series \eqref{genexp} is not suitable. To overcome with this, the following Kantorovich type modification of the family \eqref{genexp} was studied in \cite{own}. For $ x\in \mathbb{R}^{+}, k \in \mathbb{Z}$ and $w>0,$ the Kantorovich  exponential sampling series is defined by
\begin{equation} \label{kant}
(I_{w}^{\chi}f)(x):= \sum_{k= - \infty}^{\infty} \chi(e^{-k} x^{w})\  w \int_{\frac{k}{w}}^{\frac{k+1}{w}} f(e^{u})\  du \ \
\end{equation}
whenever the series (\ref{kant}) is absolutely convergent for any locally integrable function $ f: \mathbb{R}^{+} \rightarrow \mathbb{R}.$ Some direct and inverse approximation results for the family $(I_{w}^{\chi}f)$ have been discussed in \cite{own,lnr} which includes basic convergence theorem, higher order asymptotic convergence result and quantitative approximation theorem. Also, an inverse approximation theorem in case of $f \in \mathcal{C}^{(1)}(\mathbb{R}^{+})$ was proved in \cite{own} under the assumption that the fist order moment vanishes on $\mathbb{R}^{+}.$ For some recent advancements related to the family \eqref{kant}, we refer to \cite{SKD,aral,kursun,prashant}.
\par 
In the present work, we deduce a strong inverse approximation result for the family of Kantorovich exponential sampling operator $(I_{w}^{\chi})$ for $f \in \mathcal{C}(\mathbb{R}^{+}),$ without assuming that first order algebraic moment vanishes. This not only broadens the underlying class of functions but also enable the application of our theory to some other kernels, for instance, the class of Mellin B-spline kernels (see Section \ref{section4}). We also establish the saturation order, i.e. the highest order of convergence that can be achieved, for $(I_{w}^{\chi}f)$ in case of $f \in \mathcal{C}(\mathbb{R}^{+}).$ The problem of saturation order for the family of operators $(I_{w}^{\chi}),w>0$ is to find a suitable class $\mathcal{F}$ of real valued functions defined on $\mathbb{R}^+,$ a subclass $\mathcal{S}$ and a positive non-increasing function $\rho(w),w>0$ satisfying the following: there exists $h\in \mathcal{F}\setminus\mathcal{S}$ with $\|I_{w}^{\chi}h - h\|= \mathcal{O}(\rho(w))$ as $w \rightarrow \infty$ and whenever $f\in \mathcal{F}$ with $\|I_{w}^{\chi}f - f\|= {o}(\rho(w))$ as $w \rightarrow \infty$ implies that $f\in \mathcal{S}$ and vice versa. Several authors have investigated the inverse approximation results and saturation order for various sampling operators significantly, see \cite{in1,in2,in3,costainverse,costaFourier} etc. 
\par 
The proposed plan of the paper is as follows. In order to derive these results, we first define an appropriate average type kernel and derive some auxiliary results mainly concerned with this new kernel in Section \ref{section2}. In Section \ref{section3}, we establish a relation between the operator (\ref{kant}) and the derivative of the operator (\ref{genexp}) based on average type kernel. Further, we derive the asymptotic formula for the operator (\ref{kant}) using Mellin Taylor formula. By using these results, we prove the saturation theorem and inverse result for the family of sampling operators (\ref{kant}). In Section \ref{section4}, we discuss some examples of kernels satisfying the conditions, which are assumed in the hypotheses of the theorems. 
\section{Preliminaries and Auxiliary Results}\label{section2}
Let $\mathbb{R}^+$ be the set of positive real numbers and $L^{p}(\mathbb{R}^+),\ 1 \leq p < \infty$ consists of all p-integrable functions in the Lebesgue sense on $\mathbb{R}^+$ with usual $p-$norm. Further $L^{\infty}(\mathbb{R}^+)$ denotes the class of bounded measurable functions defined on $\mathbb{R}^+$ with $\|.\|_{\infty}$ norm. Let $X_{c}$ be the space of functions $f: \mathbb{R}^+ \rightarrow \mathbb{R}$ such that $f(\cdot) (\cdot)^{c-1} \in L^{1}(\mathbb{R}^+)$ for some $c \in \mathbb{R},$ equipped with following norm
$$\Vert f \Vert_{X_c} = \int_0^{\infty} |f(t)| \ t^{c} \frac{dt}{t}.$$
For $f \in X_{c},$ the Mellin transform of $f$ is given by
$$[f]^{\wedge}_M (s):= \int_0^{\infty} f(t)\ t^{s}\frac{dt}{t},\ \ \ (s = c + ix, \ x \in \mathbb{R}).$$ 
One can observe that the Mellin transform is well defined in $X_{c}$ as a Lebesgue integral. Further, for $c,t \in \mathbb{R}$ and $T>0,$ any function $f \in X_{c}(\mathbb{R}^{+})$ is said to be Mellin band-limited to $[-T,T],$ if $[f]_{\hat{M}}(c+it)=0$ for $|t| > T.$ For more details on theory of Mellin transform, we refer to \cite{butzer3,butzer4}.
The point-wise Mellin derivative of the function $f$ is defined by the following limit
$$ \theta_{c} f(t)= \lim_{h \rightarrow 1} \frac{\tau_{h}^{c}f(t) - f(t)}{h-1} = t f^{'}(t)+c f(t)\ ,$$ provided $f^{'}$ exists, where $\tau_{h}^{c}$ is the Mellin translation operator $(\tau_{h}^{c}f)(t):= h^{c} f(ht).$ 
Furthermore, the Mellin differential operator of order $r$ is given by $\theta_c^r:=\theta_c(\theta_c^{r-1}).$ Throughout this paper, we consider $\theta_c:=\theta_c^1$ and $\theta f:=\theta_{0} f.$
\par 
We now give the definition of recurrent function. We say that a function $f :\mathbb{R}^+\rightarrow\mathbb{C}$ is recurrent if $f(x)=f(e^{a}x),$ $\forall$ $x\in \mathbb{R}^+$ and for some $a\in \mathbb{R}$ (see \cite{recurrent}). The fundamental interval of the above recurrent functions can be taken as $[1, e^a].$
\par 
Let $C(\mathbb{R}^+)$ denotes the space of all uniformly continuous and bounded functions on $\mathbb{R}^+$ with norm $\|f\|_{\infty} := \sup_{t \in \mathbb{R}^+} |f(t)|.$ For any $\nu \in \mathbb{N},$ $C^{(\nu)}(\mathbb{R}^+)$ be the subspace of $C(\mathbb{R}^+)$ such that $f^{(r)} \in C(\mathbb{R}^+)$ for each $r \leq \nu, r \in \mathbb{N}.$  Also, $C_{c}^{\infty}(\mathbb{R}^{+})$ represents the space of all infinitely differentiable functions which are compactly supported in $\mathbb{R}^{+}.$ A function $f: \mathbb{R}^+ \rightarrow \mathbb{R}$ is said to be {log-uniformly continuous} on $\mathbb{R}^+$ if $\forall$\ $\epsilon > 0,$ there exists a $\delta > 0$ such that $|f(x) -f(y)| < \epsilon$ whenever $| \log x - \log y| < \delta,$ for any $x, y \in \mathbb{R}^{+}.$ 
Further, $\mathcal{C} (\mathbb{R}^+)$ denotes the space of all log-uniformly continuous and bounded functions defined on $\mathbb{R}^{+}.$ Analogous to the classical case, for any $\nu \in \mathbb{N},$ $\mathcal{C}^{(\nu)}(\mathbb{R}^+)$ be the subspace of $\mathcal{C}(\mathbb{R}^+)$ such that $(\theta^{r}f) \in \mathcal{C}(\mathbb{R}^+)$ for each $r \leq \nu, r \in \mathbb{N}.$ For $f \in C^{(r)}(\mathbb{R}^{+}),$ the Mellin's Taylor formula is defined by (see \cite{bardarotaylor})
$$ f(tx)=f(x)+ (\theta f)(x) \log t + \frac{(\theta ^{2}f)(x)}{2!} \log ^{2} t + \cdots + \frac{(\theta ^{n}f)(x)}{n!} \log ^{n} t + h(x) \log ^{n} t \ ,$$
where $h: \mathbb{R}^{+} \rightarrow \mathbb{R}$ is bounded and $h(x) \rightarrow 0$ as $x \rightarrow 1.$\\

A continuous function $ \chi :\mathbb{R}^{+} \rightarrow \mathbb{R}$ is said to be kernel if it fulfils the following conditions:
\begin{itemize}
\item[$(\chi_{1})$] For any $ u \in \mathbb{R}^{+},\ $
$  \displaystyle \sum_{k=- \infty}^{\infty} \chi(e^{-k} u) =1, \hspace{0.2cm} \mbox{uniformly on}\ \mathbb{R}^+.$

\item[$(\chi_{2})$] $\displaystyle m_{1}(\chi,u) := \sum_{k=- \infty}^{\infty} \chi(e^{-k} u) (k - \log u) \ :=  m_{1}^{\chi} \in \mathbb{R} .$

\item[$(\chi_{3})$] For some $\beta \geq 1 ,$
$\displaystyle M_{\beta}(\chi):= \sup_{u \in \mathbb{R}^+} \sum_{k= - \infty}^{\infty}  |\chi(e^{-k} u)| |k- \log u|^{\beta}< \infty .$

\item[$(\chi_{4})$] For every $\gamma>0,$ 
$ \displaystyle\lim_{w\rightarrow \infty} \sum_{|w\log x-k|>w\gamma} |\chi(e^{-k} x^w)| \ |w\log x-k|=0 $ uniformly on $\mathbb{R}^{+}.$
\end{itemize}

\begin{remark} \cite{own}
One can deduce that for $\alpha, \beta \in \mathbb{N}_{0}$ with $\alpha < \beta,$ $M_{\alpha}(\chi) <\infty$ whenever $M_{\beta}(\chi) <\infty.$ 
\end{remark}

To establish the proposed results for the Kantorovich exponential sampling operator (\ref{kant}), we define the average type kernel as follows:
\begin{equation} \label{avg}
\displaystyle \bar{\chi}(t) = \int_{e^{\frac{-1}{2}}}^{e^{\frac{1}{2}}} \chi (tu) \frac{du}{u} = \int_{\frac{-1}{2}}^{\frac{1}{2}} \chi (te^p)dp,\ \ \ \ t \in \mathbb{R}^+.
\end{equation}

In the following lemma, we show that the average type kernel satisfies the conditions $(\chi_{1})$-$(\chi_{4}).$

\begin{lemma} \label{lemma1}
Let $\chi: \mathbb{R}^{+} \rightarrow \mathbb{R}$ be the kernel function satisfying $(\chi_{1})$ -$(\chi_{4})$ and $\bar{\chi}(t)$ be defined as in (\ref{avg}). Then $\bar{\chi}$ also satisfies $(\chi_{1})$ -$(\chi_{4}).$
\end{lemma}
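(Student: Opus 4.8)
The plan is to verify each of the four kernel conditions for $\bar{\chi}$ in turn, exploiting the fact that $\bar{\chi}$ is obtained from $\chi$ by an averaging integral over the compact logarithmic window $[-\tfrac12,\tfrac12]$, so that the sums defining $(\chi_1)$--$(\chi_4)$ can be interchanged with this integral. First I would treat $(\chi_1)$: writing $\bar{\chi}(e^{-k}u) = \int_{-1/2}^{1/2} \chi(e^{-k}ue^p)\,dp$ and summing over $k$, I justify exchanging $\sum_k$ and $\int_{-1/2}^{1/2}$ (using absolute convergence guaranteed by $(\chi_3)$ with $\beta\ge 1$, hence also $\beta=0$ by the Remark, uniformly in the argument), and then for each fixed $p$ the inner sum equals $1$ by $(\chi_1)$ applied to the point $ue^p\in\mathbb{R}^+$; integrating the constant $1$ over a window of length $1$ gives $\sum_k \bar{\chi}(e^{-k}u)=1$, uniformly in $u$.

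Next, for $(\chi_3)$ (which I would do before $(\chi_2)$, since the absolute estimate is what makes the other interchanges legitimate), I bound
$$
\sum_{k}|\bar{\chi}(e^{-k}u)|\,|k-\log u|^{\beta} \le \sum_k \int_{-1/2}^{1/2} |\chi(e^{-k}ue^p)|\,|k-\log u|^{\beta}\,dp .
$$
Inside the integral I write $k-\log u = (k - \log(ue^p)) + p$ and use $|a+b|^{\beta} \le 2^{\beta-1}(|a|^{\beta}+|b|^{\beta})$ for $\beta\ge 1$, so the right-hand side is controlled by $2^{\beta-1}\big(M_{\beta}(\chi) + (\tfrac12)^{\beta} M_0(\chi)\big)$, which is finite since $M_{\beta}(\chi)<\infty$ implies $M_0(\chi)<\infty$; taking the supremum over $u$ gives $M_{\beta}(\bar{\chi})<\infty$. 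For $(\chi_2)$, the same substitution $k-\log u = (k-\log(ue^p))+p$ gives, after interchanging sum and integral,
$$
m_1(\bar{\chi},u) = \int_{-1/2}^{1/2}\Big(\sum_k \chi(e^{-k}ue^p)(k-\log(ue^p))\Big)dp + \int_{-1/2}^{1/2} p\Big(\sum_k \chi(e^{-k}ue^p)\Big)dp = m_1^{\chi} + \int_{-1/2}^{1/2} p\,dp = m_1^{\chi},
$$
using $(\chi_2)$ and $(\chi_1)$ for the two inner sums and $\int_{-1/2}^{1/2}p\,dp=0$; hence $m_1(\bar{\chi},u)$ is the constant $m_1^{\chi}\in\mathbb{R}$, so $(\chi_2)$ holds for $\bar{\chi}$.

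Finally, for $(\chi_4)$ I need to show $\sum_{|w\log x - k|>w\gamma}|\bar{\chi}(e^{-k}x^w)|\,|w\log x - k|\to 0$ uniformly in $x$. Here I replace $\bar{\chi}(e^{-k}x^w)$ by $\int_{-1/2}^{1/2}\chi(e^{-k}x^we^p)\,dp$; the key observation is that the condition $|w\log x - k| > w\gamma$ together with $|p|\le \tfrac12$ forces $|w\log x + p - k| > w\gamma - \tfrac12 > w\gamma/2$ once $w$ is large, i.e. the truncated tail for $\bar{\chi}$ (with threshold $w\gamma$) is dominated by the truncated tail for $\chi$ with the slightly smaller threshold $w\gamma/2$ applied to the point $xe^{p/w}$, and also $|w\log x - k| \le |w\log x + p - k| + \tfrac12$. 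Combining these, the sum is bounded by $\int_{-1/2}^{1/2}\sum_{|w\log(xe^{p/w}) - k| > w\gamma/2}|\chi(e^{-k}(xe^{p/w})^w)|\,\big(|w\log(xe^{p/w})-k| + \tfrac12\big)\,dp$, and each piece tends to $0$ uniformly by $(\chi_4)$ (applied with $\gamma/2$) together with the already-established uniform tail bound for $M_0$; the uniformity in $x$ survives because the bound in $(\chi_4)$ for $\chi$ is uniform in its argument. I expect the main obstacle to be exactly this last step: matching the truncation regions $|w\log x - k|>w\gamma$ for $\bar{\chi}$ with the truncation regions for $\chi$ after the shift $\log x \mapsto \log x + p/w$, and making sure the required uniformity (in $x$, as $w\to\infty$) is not lost; the other three conditions are routine once the sum--integral interchange is justified. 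Throughout, I will use the elementary substitution identity $\bar{\chi}(e^{-k}t) = \int_{-1/2}^{1/2}\chi(e^{-k}te^p)\,dp$ and the Remark that a finite $M_{\beta}$ controls all lower-order absolute moments.
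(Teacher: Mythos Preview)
Your proposal is correct and follows essentially the same route as the paper: the paper also verifies $(\chi_1)$--$(\chi_4)$ by writing $\bar{\chi}(e^{-k}u)=\int_{-1/2}^{1/2}\chi(e^{-k}ue^{p})\,dp$, interchanging sum and integral, using the splitting $k-\log u=(k-\log(ue^{p}))+p$ together with $|a+b|^{\beta}\le 2^{\beta-1}(|a|^{\beta}+|b|^{\beta})$ for $(\chi_3)$, and for $(\chi_4)$ passing from the threshold $w\gamma$ to $w\gamma-\tfrac12>w\gamma/2$ after the shift $x\mapsto xe^{p/w}$ exactly as you outline. The only cosmetic differences are that the paper treats $(\chi_2)$ before $(\chi_3)$ and, in $(\chi_4)$, replaces the integral over $p$ by a supremum over $p\in[-\tfrac12,\tfrac12]$ before invoking uniformity in the argument.
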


\begin{proof}
Since $\chi$ is continuous, the averaged type kernel $\bar{\chi}$ is also continuous. Now for $u \in \mathbb{R}^+,$ we have 
$$ m_{0}(\bar{\chi}) =  \sum_{k=- \infty}^{\infty} \bar{\chi}(e^{-k}u)= \sum_{k=- \infty}^{\infty} \int_{\frac{-1}{2}}^{\frac{1}{2}} \chi (e^{-k} u e^p)dp = \displaystyle \int_{\frac{-1}{2}}^{\frac{1}{2}} dp = 1 .$$
Hence $\overline{\chi}$ satisfies $(\chi_{1}).$ Using the condition $(\chi_{2}),$ we get
\begin{eqnarray*}
m_{1} (\bar{\chi},u) &=&  \sum_{k=- \infty}^{\infty}\bar{\chi}(e^{-k}u)(k - \log u)\\
&=& \sum_{k=- \infty}^{\infty} \int_{\frac{-1}{2}}^{\frac{1}{2}} \chi (e^{-k} u e^p) (k - \log u + p - p) \ dp\\
&=& \int_{\frac{-1}{2}}^{\frac{1}{2}} \sum_{k=- \infty}^{\infty} \chi(e^{-k} u e^p)( k- \log(ue^p) + p) \ dp\\
&=&  m_1 (\chi , u) \int_{\frac{-1}{2}}^{\frac{1}{2}}dp  \ + \ \int_{\frac{-1}{2}}^{\frac{1}{2}}p \ dp = m_{1}^{\chi}.
\end{eqnarray*}
We define $\displaystyle M_{\beta} (\bar{\chi}):= \sup_{u \in \mathbb{R}^+} \sum_{k=- \infty}^{\infty} \vert \bar{\chi} (e^{-k} u) \vert  \ \vert k - \log u \vert ^{\beta}.$ For $\beta \geq 1,$  it is given that $M_{\beta} (\chi) < \infty.$ Now we show that $M_{\beta}(\bar{\chi}) < \infty.$ In view of (\ref{avg}), we have
\begin{eqnarray*}
M_{\beta} (\bar{\chi}) & \leq & \sum_{k=- \infty}^{\infty} \int_{\frac{-1}{2}}^{\frac{1}{2}}  \left( \vert \chi (e^{-k}u e^{p}) \vert \  \vert k - \log u + p -p \vert ^{\beta} \right) dp \\
& \leq & \int_{\frac{-1}{2}}^{\frac{1}{2}} \left( \sum_{k=- \infty}^{\infty} \vert \chi (e^{-k}u e^{p}) \vert \ \vert k - \log (u e^{p}) + p \vert ^{\beta} \right) dp \\
& \leq & \int_{\frac{-1}{2}}^{\frac{1}{2}} \sum_{k=- \infty}^{\infty} \vert \chi (e^{-k} u e^{p}) \vert \left( 2^{\beta - 1} \vert k - \log (u e^{p}) \vert ^{\beta} + \vert p \vert ^{\beta} \right) dp .
\end{eqnarray*}
Since $\beta \geq 1,$ then by using the inequality $\vert a+b \vert ^\beta \leq 2 ^{\beta-1} \left(\vert a \vert ^\beta + \vert b \vert^\beta \right),$ we obtain
\begin{eqnarray*}
M_{\beta} (\overline{\chi}) & \leq & 2 ^{\beta - 1} \int_{\frac{-1}{2}}^{\frac{1}{2}} \left( \sum_{k=- \infty}^{\infty} \vert \chi (e^{-k} u e^{p}) \vert \  \vert k - \log (u e^{p}) \vert ^{\beta} \right) dp \\ && + 2^{\beta - 1} \int_{\frac{-1}{2}}^{\frac{1}{2}} \left( \sum_{k=- \infty}^{\infty} \vert \chi (e^{-k} u e^{p}) \vert \  \vert  p \vert ^{\beta} \right) dp \\
& \leq & 2 ^{\beta - 1} \left( M_{\beta} (\chi) +  M_0 ({\chi}) \int_{\frac{-1}{2}}^{\frac{1}{2}} \vert  p \vert ^{\beta} dp  \right) \\
& \leq & 2 ^{\beta -1} M_{\beta} (\chi) + \frac{M_{0} (\chi)}{2^{\beta +1}(\beta +1)} ((-1)^{\beta +1}+1).
\end{eqnarray*}
Since $M_{\beta}(\chi) < \infty,$ so we have $M_{\beta}(\bar{\chi}) < \infty.$\\

Now we show that for $\gamma >0,$ 
$$\displaystyle\lim_{w\rightarrow \infty} \sum_{|w\log x-k|>w\gamma} |\bar{\chi}(e^{-k} x^w)| \ |w\log x-k|=0 $$ uniformly w.r.t. $ x\in \mathbb{R}^{+}.$ For $w>\dfrac{1}{\gamma},$ by using the definition of $\bar{\chi},$ we can write \\

\noindent 
$\displaystyle\sum_{|w\log x-k|>w\gamma} |\bar{\chi}(e^{-k} x^w)| \ |w\log x-k|$
\begin{eqnarray*}
&\leq &\sup_{p\in[-1/2,1/2]}\left(\sum_{|w\log x-k|>w\gamma} |\chi(e^{-k} x^we^{p})| \ |w\log x-k+\log(e^p)-\log(e^p)|\right)\\
&\leq &\sup_{p\in[-1/2,1/2]}\left(\sum_{|w\log x-k+\log(e^p)|>w\gamma-1/2} |\chi(e^{-k} x^we^{p})| \ (|w\log x-k+\log(e^p)|+|\log(e^p)|)\right)\\
&\leq &\sup_{y\in\mathbb{R}^{+}}\left(\sum_{|w\log y-k|>w\gamma-1/2} |\chi(e^{-k} y^w)| \left(|w\log y-k|+\frac{1}{2}\right)\right)\\
&\leq &\sup_{y\in\mathbb{R}^{+}}\left(\sum_{|w\log y-k|>w\gamma/2} |\chi(e^{-k} y^w)||w\log y-k|+\frac{1}{2}\sum_{|w\log y-k|>w\gamma/2} |\chi(e^{-k} y^w)|\right).
\end{eqnarray*}
Using the condition $(\chi_{4}),$ we deduce that
$$\displaystyle\lim_{w\rightarrow \infty} \sum_{|w\log x-k|>w\gamma} |\bar{\chi}(e^{-k} x^w)| \ |w\log x-k|=0 $$ uniformly w.r.t. $ x\in \mathbb{R}^{+}.$ This concludes the proof.
\end{proof}

Next we deduce the following result which will be useful to obtain a relation between the sampling series \eqref{genexp} and \eqref{kant}.

\begin{lemma}\label{lemma2}
Let $[a,b] \subset \mathbb{R}^{+}$ and $f:[a,b] \rightarrow \mathbb{R}$ be continuous. If 
\begin{equation} \label{antimellin}
F(x)= \int_{0}^{x} f(t) \frac{dt}{t},\ \ \ x \in [a,b].
\end{equation}
Then $F$ is Mellin differentiable and $ \ (\theta F)(x)=f(x),\ \forall x \in [a,b].$
\end{lemma}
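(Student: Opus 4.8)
The plan is to reduce the statement to the first fundamental theorem of calculus and then to translate the resulting ordinary derivative of $F$ into its Mellin derivative directly from the definition $(\theta F)(x) = \lim_{h\to 1}\frac{(\tau_h^0 F)(x)-F(x)}{h-1}$.

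First I would observe that, since $f$ is continuous on $[a,b]$ and $t>0$ there, the function $t\mapsto f(t)/t$ is continuous on $[a,b]$; writing $F(x) = F(a) + \int_{a}^{x} f(t)\,\frac{dt}{t}$ for $x\in[a,b]$ isolates the only part of the integral that depends on $x$ (the tail $\int_0^a f(t)\,\frac{dt}{t}$ being merely the constant $F(a)$). By the first fundamental theorem of calculus, $F$ is ordinarily differentiable on $[a,b]$, with one-sided derivatives at the endpoints, and $F'(x) = f(x)/x$ for every $x\in[a,b]$.

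Next I would unwind the definition of the Mellin derivative. For $x\in[a,b]$ and $h$ near $1$ we have $(\tau_h^{0} F)(x) = F(hx)$, hence
\[
\frac{(\tau_h^{0} F)(x)-F(x)}{h-1} \;=\; \frac{F(hx)-F(x)}{hx-x}\cdot x .
\]
As $h\to 1$, $hx\to x$, and since $F$ is differentiable at $x$ the first factor tends to $F'(x)$; therefore the limit defining $(\theta F)(x)$ exists and equals $x F'(x)$. Combining this with $F'(x)=f(x)/x$ gives $(\theta F)(x) = x\cdot f(x)/x = f(x)$ for all $x\in[a,b]$, which is exactly the assertion (and is consistent with the formula $\theta f(t)=tf'(t)$ recorded in Section \ref{section2}).

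There is no serious obstacle here. The only point requiring a little care will be the behaviour at the endpoints $a$ and $b$, where only one-sided difference quotients are available; this is handled by reading $F'(a)$ and $F'(b)$ as the corresponding one-sided derivatives, which is all that Lemma \ref{lemma2} is subsequently used for. The contribution of the lower limit $0$ of the integral is harmless, as it only adds the constant $F(a)$ that disappears upon differentiation.
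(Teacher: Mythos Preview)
Your argument is correct and complete. It differs from the paper's proof in route, though both are short and elementary.

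The paper performs the logarithmic substitution $v=\log t$ to write $F(x)=\int_{\log a}^{\log x} f(e^{v})\,dv$, then evaluates $F(sx)-F(x)$ by the mean value theorem for integrals to obtain $(\log s)\,f(e^{\xi})$ with $\xi$ between $\log x$ and $\log(sx)$, and finally lets $s\to 1$. In other words, the paper essentially re-derives the fundamental theorem of calculus in the Mellin setting. Your approach instead invokes the classical fundamental theorem once to get $F'(x)=f(x)/x$, and then appeals to the identity $(\theta F)(x)=xF'(x)$ already recorded in Section~\ref{section2}. This is arguably the more economical route, since it reuses a formula the paper has stated rather than proving it afresh; on the other hand, the paper's computation is self-contained in that it works directly with the difference quotient $\frac{F(sx)-F(x)}{\log s}$ without passing through the ordinary derivative. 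Your explicit remark that the lower limit $0$ contributes only a constant, and your care about one-sided derivatives at the endpoints, are both appropriate and in fact slightly more careful than the paper's own treatment.
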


\begin{proof} We have  
\begin{eqnarray*}
F(x)&=& \int_{a}^{x} f(t) \frac{dt}{t} = \int_{\log a}^{\log x} f(e^v) \ dv.
\end{eqnarray*}
This gives
\begin{eqnarray*}
F(s x)-F(x) &=& \int_{\log a}^{\log s x}f(e^v) \ dv - \int_{\log a}^{\log x}f(e^v) \ dv \\
&=&
\begin{cases}
     {\displaystyle \int_{\log x}^{\log s x} f(e^v) \ dv} &\quad\text{if} \ \ \ \ {s >1}\\
 {\displaystyle - \int_{\log x}^{\log s x} f(e^v) \ dv} &\quad\text{if} \ \ \ \ {s <1.}\\
\end{cases} \\
\end{eqnarray*}
On applying the mean value theorem for integral calculus, we get
$$ \int_{\log x}^{\log s x} f(e^v) \ dv = (\log s) f(e^{\xi}),$$ where $\xi \in [\log x, \log sx]$ for $s>1$ and $\xi \in [\log sx, \log x]$ for $s < 1.$ This gives $$F(sx)-F(x)=\log (s) f(e^{\xi}),\ \ \xi \in [\log x, \log sx].$$ This implies that
$$\frac{F(sx)-F(x)}{\log s} = f(s x) .$$ Taking limit as $s \rightarrow 1,$ we deduce
$$\lim_{s \rightarrow 1} \frac{F(sx)-F(x)}{\log s} = f(x).$$ This gives $(\theta F)(x)=f(x),\ \ x \in [a,b].$ Thus, the proof is completed. 
\end{proof}

\section{Main Results} \label{section3}

In this section, we derive the inverse approximation result and saturation order for the Kantorovich exponential sampling series $(I_{w}^{\chi}f).$ First we establish the relation between $(S_{w}^{\chi}f)$ and $(I_{w}^{\chi}f).$
Using the continuity of $\chi$ and Lemma \ref{lemma2}, we obtain
\begin{equation} \label{diff}
\theta \bar{\chi} (t) = \chi (te^{\frac{1}{2}}) - \chi (te^{\frac{-1}{2}}).
\end{equation}

\begin{lemma} \label{lemma3}
Let $f \in C(\mathbb{R}^{+})$ and $F$ be the Mellin anti-derivative of $f.$ Then for $x,w \in \mathbb{R}^{+},$ there holds
$$(I_w^\chi f)(x) = (\theta S^{\bar{\chi}}_w F) (xe^{\frac{1}{2w}}).$$
\end{lemma}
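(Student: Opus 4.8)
The goal is to show $(I_w^\chi f)(x) = (\theta S^{\bar\chi}_w F)(xe^{1/(2w)})$, so the natural strategy is to start from the right-hand side, differentiate the generalized sampling series built on the averaged kernel $\bar\chi$, and recognize the Kantorovich series. First I would write out $(S^{\bar\chi}_w F)(y) = \sum_k \bar\chi(e^{-k}y^w)\, F(e^{k/w})$ and apply the Mellin derivative $\theta$ in the variable $y$. Since $F$ is fixed (independent of $y$), the operator $\theta$ passes through the sum and acts only on $y \mapsto \bar\chi(e^{-k}y^w)$; a short chain-rule computation, using $\theta_0 g(y) = y g'(y)$, gives $\theta_y\big[\bar\chi(e^{-k}y^w)\big] = w\, (\theta\bar\chi)(e^{-k}y^w)$, because raising to the $w$-th power scales the logarithmic derivative by $w$. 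Here I need to justify interchanging $\theta$ with the infinite sum, which follows from the uniform (in the relevant region) convergence of the differentiated series — and $\theta\bar\chi$ is controlled via \eqref{diff} together with the decay conditions $(\chi_3)$–$(\chi_4)$ that $\bar\chi$ inherits by Lemma \ref{lemma1}.

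**Key computation.** Combining the above with formula \eqref{diff}, $\theta\bar\chi(t) = \chi(te^{1/2}) - \chi(te^{-1/2})$, I get
$$(\theta S^{\bar\chi}_w F)(y) = w \sum_{k=-\infty}^{\infty} \Big[\chi(e^{-k}y^w e^{1/2}) - \chi(e^{-k}y^w e^{-1/2})\Big] F(e^{k/w}).$$
Now I evaluate at $y = xe^{1/(2w)}$, so that $y^w = x^w e^{1/2}$, turning the bracket into $\chi(e^{-k}x^w e) - \chi(e^{-k}x^w)$; re-indexing the first sum ($k \mapsto k-1$) telescopes the two sums into $\sum_k \chi(e^{-k}x^w)\big[F(e^{(k+1)/w}) - F(e^{k/w})\big]$. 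Finally, by Lemma \ref{lemma2} (or rather its defining formula \eqref{antimellin} for $F$ as a Mellin antiderivative of $f$), $F(e^{(k+1)/w}) - F(e^{k/w}) = \int_{k/w}^{(k+1)/w} f(e^u)\, du$, which matches exactly the $w$-weighted integral appearing in \eqref{kant} after multiplying by $w$. This reproduces $(I_w^\chi f)(x)$.

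**Main obstacle.** The only genuinely delicate point is the legitimacy of the term-by-term Mellin differentiation of $S^{\bar\chi}_w F$ and, relatedly, ensuring all series in sight converge absolutely and uniformly on a neighborhood of the evaluation point. Since $f \in C(\mathbb{R}^+)$ is bounded, $F(e^{k/w})$ grows at most linearly in $|k|$, and the differentiated kernel terms decay like $|\chi(e^{-k}\cdot)|$ with the moment bounds $(\chi_3)$ (with $\beta \ge 1$); the product is summable, and the convergence is locally uniform in $y$, which is what is needed to pass $\theta$ (a limit of difference quotients) inside the sum. I would spell this out carefully: fix $y$ in a compact neighborhood, bound $|\bar\chi(e^{-k}y^w e^\pm) F(e^{k/w})| \le C(1+|k-w\log y|)^{-1}\cdot\|f\|_\infty(1+|k|/w) \cdot(\text{absolute moment factor})$ using $M_1(\bar\chi)<\infty$, and invoke dominated convergence for the difference quotient. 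Everything else is the bookkeeping of the telescoping and the change of variables $y=xe^{1/(2w)}$, which is routine.
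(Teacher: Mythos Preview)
Your proposal is correct and is essentially the mirror image of the paper's proof: the paper starts from $(I_w^\chi f)(x)$, rewrites the integrals as $w[F(e^{(k+1)/w})-F(e^{k/w})]$, re-indexes, and identifies the resulting difference $\chi(e^{-k}x^we^{1/2}e^{1/2})-\chi(e^{-k}x^we^{1/2}e^{-1/2})$ with $(\theta\bar\chi)(e^{-k}x^we^{1/2})$ via \eqref{diff}, whereas you run the same computation from the $(\theta S^{\bar\chi}_wF)$ side. The ingredients (the chain rule $\theta_y[\bar\chi(e^{-k}y^w)]=w(\theta\bar\chi)(e^{-k}y^w)$, formula \eqref{diff}, the shift $k\mapsto k-1$, and the antiderivative identity) are identical; your version has the added virtue of flagging the term-by-term differentiation issue, which the paper's proof leaves implicit.
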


\begin{proof} Using (\ref{antimellin}), we can write
\begin{eqnarray*}
(I_w^{\chi} f)(x) &=& \displaystyle \sum_{k=-\infty}^{\infty} \chi (e^{-k} x^w)\  w \int_{\frac{k}{w}}^{\frac{k+1}{w}} f (e^u)\ du \\
&=& \sum_{k=-\infty}^{\infty} \chi (e^{-k} x^w) \ w \left( F (e^{\frac{k+1}{w}}) - F (e^{\frac{k}{w}}) \right)\\
&=& w \left( \sum_{k=-\infty}^{\infty} \chi (e^{-k} x^w)F (e^{\frac{k+1}{w}}) -  \sum_{k=-\infty}^{\infty} \chi (e^{-k} x^w)F (e^{\frac{k}{w}}) \right).
\end{eqnarray*}
Setting $\widetilde{k} = k+1$ in the first term of the above expression, we have
\begin{eqnarray} \label{rel} \nonumber
(I_w^{\chi} f)(x) &=& w \left( \sum_{\widetilde{k}=-\infty}^{\infty} \chi (e^{\widetilde{-k}} x^w)F (e^{\frac{\widetilde{k}}{w}}) -  \sum_{k =-\infty}^{\infty} \chi (e^{-k} x^w) F (e^{\frac{k}{w}}) \right)\\  \nonumber
& =&  w \left( \sum_{\widetilde{k}=-\infty}^{\infty} \chi (e^{\widetilde{-k}} x^w e^{\frac{1}{2}} e^{\frac{1}{2}}) F (e^{\frac{\widetilde{k}}{w}}) \right) - w \left( \sum_{k=-\infty}^{\infty} \chi (e^{-k}  x^w e^{\frac{1}{2}} e^{\frac{-1}{2}}) F (e^{\frac{k}{w}}) \right)\\ \nonumber
& =& \sum_{k=-\infty}^{\infty} F (e^{\frac{k}{w}})\ w \left(\chi (e^{-k} x^w e^{\frac{1}{2}} e^{\frac{1}{2}}) - \chi (e^{-k} x^w e^{\frac{1}{2}} e^{\frac{-1}{2}}) \right). \\ \nonumber
\end{eqnarray}
From (\ref{diff}), we get 
$$ \displaystyle (I_w^{\chi} f)(x)= \sum_{k=-\infty}^{\infty} F (e^{\frac{k}{w}}) \ w \ (\theta \bar{\chi}) (e^{-k} x^w e^{\frac{1}{2}}).$$
Since $(\theta f)(x)=x f^{'}(x),$ thus we obtain $$(\theta S_{w}^{\bar{\chi}}F)(xe^{\frac{1}{2w}}) =\sum_{k=-\infty}^{\infty} F (e^{\frac{k}{w}}) \ w \ (\theta \bar{\chi})(e^{-k} x^w e^{\frac{1}{2}}).$$ Hence, the proof is established.
\end{proof}

Next we establish the asymptotic formula for the  series $(S_{w}^{\chi}f).$ This result is required to derive the saturation order for  the Kantorovich exponential sampling series $(I_{w}^{\chi}f).$

\begin{thm}\label{xyz1}
If $f: \mathbb{R}^{+} \rightarrow \mathbb{R}$ be
differentiable such that $(\theta f)$ is log-uniformly continuous and bounded on $\mathbb{R}^{+},$ then
$$\displaystyle \lim_{w \to + \infty} w\ ((S_{w}^{\chi}f)(x)-f(x))=m_{1}^{\chi} \ (\theta f)(x),$$
uniformly on $\mathbb{R}^{+}.$
\end{thm}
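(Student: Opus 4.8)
The plan is to run a first-order Mellin--Taylor argument and absorb the error using the moment conditions $(\chi_1)$--$(\chi_4)$ together with the log-uniform continuity of $\theta f$. Fix $x,w\in\mathbb{R}^+$ and write each node as $e^{k/w}=t_k x$ with $\log t_k=(k-w\log x)/w$. Since $f$ is bounded, the series defining $(S^{\chi}_w f)(x)$ converges absolutely by $(\chi_1)$, and applying $(\chi_1)$ at $u=x^w$ gives $(S^{\chi}_w f)(x)-f(x)=\sum_{k}\chi(e^{-k}x^w)\big(f(t_k x)-f(x)\big)$. Because $\frac{d}{dv}f(e^v x)=(\theta f)(e^v x)$, the fundamental theorem of calculus yields the exact identity
$$f(t_k x)-f(x)=(\theta f)(x)\log t_k+R_k(x),\qquad R_k(x):=\int_0^{\log t_k}\big[(\theta f)(e^v x)-(\theta f)(x)\big]\,dv,$$
which is the order-one Mellin--Taylor formula with an integral remainder (this is where the cited Mellin--Taylor expansion enters, only now with a remainder we can control uniformly).

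Next I would extract the main term. Multiplying by $w$ and using $w\log t_k=k-w\log x$, the linear part becomes, by $(\chi_2)$ with $u=x^w$,
$$w\,(\theta f)(x)\sum_{k}\chi(e^{-k}x^w)\log t_k=(\theta f)(x)\sum_{k}\chi(e^{-k}x^w)(k-w\log x)=m_1^{\chi}\,(\theta f)(x),$$
with no error whatsoever. Hence the theorem reduces to showing that $w\sum_{k}\chi(e^{-k}x^w)R_k(x)\to0$ as $w\to\infty$, uniformly in $x\in\mathbb{R}^+$.

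For the remainder I would use the hypothesis on $\theta f$: given $\epsilon>0$, choose $\delta>0$ so that $|(\theta f)(e^v x)-(\theta f)(x)|<\epsilon$ whenever $|v|<\delta$, for every $x$. Then $|R_k(x)|\le\epsilon|\log t_k|=\epsilon|k-w\log x|/w$ when $|k-w\log x|<w\delta$, while always $|R_k(x)|\le 2\|\theta f\|_\infty|k-w\log x|/w$. Splitting the sum at $|k-w\log x|=w\delta$: the near part is bounded by $\epsilon\sum_{|k-w\log x|<w\delta}|\chi(e^{-k}x^w)|\,|k-w\log x|\le\epsilon\,M_1(\chi)$, where $M_1(\chi)<\infty$ by $(\chi_3)$ (cf.\ the Remark); the far part is bounded by $2\|\theta f\|_\infty\sum_{|k-w\log x|\ge w\delta}|\chi(e^{-k}x^w)|\,|k-w\log x|$, which tends to $0$ uniformly in $x$ by $(\chi_4)$ with $\gamma=\delta$. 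Therefore $\limsup_{w\to\infty}\sup_{x\in\mathbb{R}^+}\big|w\big((S^{\chi}_w f)(x)-f(x)\big)-m_1^{\chi}(\theta f)(x)\big|\le\epsilon\,M_1(\chi)$, and letting $\epsilon\to0$ finishes the proof.

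The main obstacle is making the Taylor remainder negligible \emph{uniformly} in $x\in\mathbb{R}^+$: the merely pointwise remainder (an $h(x)\log t$ term vanishing only as $t\to1$) is not enough. This is exactly the point at which log-uniform continuity of $\theta f$, rather than plain continuity, is needed to choose $\delta$ independent of $x$, and at which condition $(\chi_4)$ is needed to kill the contribution of the nodes far from $\log x$, where the first-order approximation is poor; conditions $(\chi_1)$ and $(\chi_3)$ keep all the series absolutely convergent throughout.
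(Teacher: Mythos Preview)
Your proof is correct and follows essentially the same route as the paper's: a first-order Mellin--Taylor expansion, extraction of the main term via $(\chi_2)$, and a near/far split of the remainder handled by log-uniform continuity together with $M_1(\chi)<\infty$ on the near part and by $(\chi_4)$ on the far part. The only cosmetic difference is that the paper writes the remainder in Lagrange (mean-value) form, $(\theta f)(\xi)-(\theta f)(x)$, whereas you use the equivalent integral form.
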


\begin{proof}
Setting $u=k/w$ in the first order Mellin Taylor formula, we obtain
$$f(e^{k/w})=f(x)+ \left( \frac{k}{w} - \log x \right) (\theta f)(\xi), \ \ \ \ \xi \in (k/w,\log x).$$ Operating $\displaystyle \sum_{k=-\infty}^{\infty}\chi(e^{-k} x^{w})$ on both sides of the above expression, we get
$$ (S_{w}^{\chi}f)(x)= f(x)+ \sum_{k=-\infty}^{\infty} \chi(e^{-k}x^{w})(\theta f )(\xi) \left(\frac{k}{w}-\log x \right) := f(x)+R_{w}(x).$$
The above expression implies 
$$w \left( (S_{w}^{\chi}f)(x)-f(x) \right) - m_{1}^{\chi} (\theta f)(x) = w \ R_{w}(x) - m_{1}^{\chi} (\theta f)(x).$$
For $\delta >0,$ we can write\\

\noindent 
$w \ R_{w}(x) - m_{1}^{\chi} \ (\theta f)(x)$
\begin{eqnarray*}
 &=& \sum_{k=-\infty}^{\infty} \chi (e^{-k}x^{w})(k-w \log x)((\theta f)(\xi)-(\theta f)(x)) \\
&=& \left(\sum_{\left|\frac{k}{w}-\log x \right|<\delta}+\sum_{\left|\frac{k}{w}-\log x \right|\geq\delta}\right)\chi(e^{-k}x^{w})(k-w\log x)((\theta f)(\xi)-(\theta f)(x))\\
&:=& I_{1}+I_{2}.
\end{eqnarray*}
Since $(\theta f)$ is log-uniformly continuous, $\forall \epsilon >0,$ $\exists \,\ \delta >0$ such that $\left| (\theta f)(\xi)- (\theta f)(x) \right| < \epsilon,$ for $\left|\dfrac{k}{w} - \log x \right| < \delta.$ This gives
\begin{eqnarray*}
|I_{1}|&\leq &  \epsilon \sum_{\left|\frac{k}{w}-\log x \right|<\delta}| \chi(e^{-k}x^{w})| |k-w\log x| \\
&\leq& \epsilon \ M_{1}(\chi).
\end{eqnarray*}
Since $\epsilon>0$ is arbitrary, we obtain $I_1\rightarrow 0$ as $w\rightarrow\infty.$ In view of boundedness of $(\theta f),$ we get  
\begin{eqnarray*}
| I_{2} | &\leq & 2 \| \theta f \|_{\infty}\sum_{|{k}-w\log x| \geq w \delta}| \chi(e^{-k}x^{w})| |k-w\log x|.
\end{eqnarray*}
Using the condition $(\chi_{4}),$ we obtain $I_2\rightarrow 0$ as $w\rightarrow\infty.$ On collecting the estimates $I_{1}-I_{2},$ we obtain 
$$ \lim_{w \to +\infty} w \left[ (S_{w}^{\chi}f)(x)-f(x) \right] =m_{1}^{\chi} \ (\theta f)(x).$$
Thus, the proof is completed. 
\end{proof}

This gives the following corollary.
\begin{corollary}\label{xyz2}
Let $f:\mathbb{R}^{+}\to \mathbb{R}$ be such that $(\theta f) \in \mathcal{C}(\mathbb{R}^+).$ Then for any $x \in \mathbb{R}^+$ and $w>0,$ we have
$$ \lim_{w \to +\infty} w[(S_{w}^{\chi}f)(x e^{1/2w})-f(x)]=(2 m_{1}^{\chi} +1) \frac{(\theta f)(x)}{2},$$
uniformly on $\mathbb{R}^{+}.$
\end{corollary}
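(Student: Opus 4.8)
The plan is to split the quantity under the limit as
$$w\big[(S_{w}^{\chi}f)(xe^{1/2w})-f(x)\big] = w\big[(S_{w}^{\chi}f)(xe^{1/2w})-f(xe^{1/2w})\big] + w\big[f(xe^{1/2w})-f(x)\big],$$
and to handle the two brackets separately. First observe that the hypothesis $(\theta f)\in\mathcal{C}(\mathbb{R}^{+})$ already forces $f$ to be differentiable with $(\theta f)$ log-uniformly continuous and bounded (and in fact $f'(t)=(\theta f)(t)/t$ is continuous), so Theorem \ref{xyz1} is applicable to $f$, and its conclusion holds uniformly on $\mathbb{R}^{+}$.

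For the first bracket I would exploit exactly this uniformity by evaluating the uniform limit at the $w$-dependent point $y=xe^{1/2w}$:
$$\Big| w\big[(S_{w}^{\chi}f)(xe^{1/2w})-f(xe^{1/2w})\big]-m_{1}^{\chi}(\theta f)(xe^{1/2w})\Big| \le \sup_{y\in\mathbb{R}^{+}}\Big| w\big[(S_{w}^{\chi}f)(y)-f(y)\big]-m_{1}^{\chi}(\theta f)(y)\Big| \longrightarrow 0.$$
Since $|\log(xe^{1/2w})-\log x| = 1/(2w)\to 0$, the log-uniform continuity of $\theta f$ gives $(\theta f)(xe^{1/2w})\to(\theta f)(x)$ uniformly on $\mathbb{R}^{+}$; hence the first bracket tends to $m_{1}^{\chi}(\theta f)(x)$ uniformly.

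For the second bracket I would use the Mellin analogue of the fundamental theorem of calculus. Since $\dfrac{d}{dp}f(xe^{p}) = xe^{p}f'(xe^{p}) = (\theta f)(xe^{p})$ and $\theta f$ is continuous, one has
$$w\big[f(xe^{1/2w})-f(x)\big] = w\int_{0}^{1/(2w)}(\theta f)(xe^{p})\,dp.$$
Writing the integrand as $(\theta f)(x)+\big[(\theta f)(xe^{p})-(\theta f)(x)\big]$ and using $|\log(xe^{p})-\log x| = p\le 1/(2w)$ together with the log-uniform continuity of $\theta f$, the remainder is $o(1)$ uniformly in $x$, so this bracket tends to $\tfrac{1}{2}(\theta f)(x)$ uniformly on $\mathbb{R}^{+}$.

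Adding the two limits yields $m_{1}^{\chi}(\theta f)(x)+\tfrac{1}{2}(\theta f)(x) = \tfrac{2m_{1}^{\chi}+1}{2}(\theta f)(x)$, which is the assertion. The only genuinely delicate point is the first bracket: one must justify evaluating the conclusion of Theorem \ref{xyz1} at the moving argument $xe^{1/2w}$, which is permissible precisely because that convergence is uniform on $\mathbb{R}^{+}$; the remaining estimates are routine $\varepsilon$--$\delta$ arguments powered by the log-uniform continuity of $\theta f$.
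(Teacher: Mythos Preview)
Your proposal is correct and follows essentially the same route as the paper: both split off the term $w[f(xe^{1/2w})-f(x)]$, invoke the uniform conclusion of Theorem~\ref{xyz1} at the moving point $u=xe^{1/2w}$, and then use the log-uniform continuity of $\theta f$ to pass from $(\theta f)(xe^{1/2w})$ to $(\theta f)(x)$. Your integral representation for the second bracket is a slight refinement that makes the uniformity in $x$ explicit, whereas the paper only asserts the pointwise Mellin-derivative limit there.
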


\begin{proof} Using Theorem \ref{xyz1}, we write 
$$\Big| w \left[(S_{w}^{\chi}f)(u)-f(u) \right] - m_{1}^{\chi} \ (\theta f)(u) \Big| < \epsilon \ ,$$ hold for any $u \in \mathbb{R}^{+}$ and for every fixed $\epsilon>0.$ If we take $u=x  e^{1/2w},$ then from the above estimate, we obtain
$$\Big| w \left[(S_{w}^{\chi}f)(x e^{1/2w})-f(x e^{1/2w}) \right]- m_{1}^{\chi} \ (\theta f)(x e^{1/2w})\Big| < \epsilon.$$
Thus, we have \\

\noindent 
$ \Big| w \left[(S_{w}^{\chi}f)(x e^{1/2w})-f(x) \right ]-(2 m_{1}^{\chi} +1) \dfrac{(\theta f)(x)}{2} \Big| $
\begin{eqnarray*}
& < & \epsilon + \Bigg| w(f(x e^{1/2w})-f(x))-\frac{(\theta f)(x)}{2}\Bigg| + \Big| m_{1}^{\chi} \left( (\theta f)(x)-(\theta f)(x e^{\frac{1}{2w}}) \right) \Big| \\
&<& \epsilon+ \frac{1}{2}\Bigg| \frac{f(x e^{1/2w})-f(x)}{1/2w}-\frac{(\theta f)(x)}{2}\Bigg| + \Big| m_{1}^{\chi} \left( (\theta f)(x) - (\theta f)(x e^{\frac{1}{2w}}) \right) \Big| \\
&:=& \epsilon + I_{3}+I_{4}.
\end{eqnarray*}
Since $\displaystyle \lim_{w \rightarrow \infty} 2w \left( f(x e^{1/2w})-f(x) \right) = \frac{(\theta f)(x)}{2},$ we deduce that $I_{3} \rightarrow 0$ as $w \rightarrow \infty.$ Since $\theta f$ is log-uniformly continuous, we have $$|\theta f(x)-\theta f(x e^{1/2w})| < \epsilon,$$ whenever $|\log x - \log (x e^{1/2w})|< \delta.$
Therefore, we obtain 
$$ \Bigg| w \left[(S_{w}^{\chi}f)(x e^{1/2w})-f(x) \right] - 2 ( m_{1}^{\chi} +1) \frac{(\theta f)(x)}{2} \Bigg| < \epsilon .$$ Hence we get the desired result. 
\end{proof}

In what follows, we shall define the class of log-H\"{o}lderian functions as
$$ L_{\alpha} := \{ f: I \rightarrow \mathbb{R} \ : \exists \ \mbox{K $>$ 0 \ s.t.} \ \ | f(x)-f(y)| \leq  K | \log x - \log y |^{\alpha};  \hspace{0.11cm}x,y \in I \},$$ with $I \subseteq \mathbb{R}^{+}$ and $0 < \alpha \leq 1.$ Now we prove the following direct approximation result.

\begin{thm}\label{xyz4} 
Let $\chi$ be a kernel function and $f \in L_\alpha.$ Then the following holds 
$$ \| I_{w}^{\chi}f - f\|_\infty = \mathcal{O}(w^{-\alpha}) ~~~ \mbox{as} \ \ w \rightarrow \infty.$$
\end{thm}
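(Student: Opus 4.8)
The plan is to estimate $|(I_w^\chi f)(x) - f(x)|$ directly by exploiting the partition-of-unity property $(\chi_1)$ and the log-H\"older assumption. First I would use $(\chi_1)$ in the form $\sum_{k}\chi(e^{-k}x^w) = 1$ to write
$$(I_w^\chi f)(x) - f(x) = \sum_{k=-\infty}^{\infty} \chi(e^{-k}x^w)\; w\int_{k/w}^{(k+1)/w}\bigl(f(e^u) - f(x)\bigr)\,du.$$
Since $f\in L_\alpha$, for each $u$ in the interval of integration we have $|f(e^u) - f(x)| \le K|u - \log x|^\alpha$, and for $u\in[k/w,(k+1)/w]$ one has $|u-\log x| \le |k/w - \log x| + 1/w \le (1/w)(|k - w\log x| + 1)$. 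Plugging this in and using $w\int_{k/w}^{(k+1)/w}du = 1$ gives the pointwise bound
$$|(I_w^\chi f)(x) - f(x)| \le \frac{K}{w^\alpha}\sum_{k=-\infty}^{\infty}|\chi(e^{-k}x^w)|\;\bigl(|k - w\log x| + 1\bigr)^\alpha.$$

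Next I would control the series. Using $(a+b)^\alpha \le a^\alpha + b^\alpha$ for $0<\alpha\le 1$ (subadditivity of $t\mapsto t^\alpha$), split $\bigl(|k-w\log x|+1\bigr)^\alpha \le |k-w\log x|^\alpha + 1$, so the sum is dominated by
$$\sum_{k}|\chi(e^{-k}x^w)|\,|k - w\log x|^\alpha + \sum_{k}|\chi(e^{-k}x^w)|.$$
The second sum equals $M_0(\chi) \le M_\beta(\chi) < \infty$ by the Remark (with $\beta\ge 1$). For the first sum, with $v = x^w$ we have $w\log x = \log v$, so it is $\sum_k|\chi(e^{-k}v)|\,|k-\log v|^\alpha \le M_\alpha(\chi)$, and $M_\alpha(\chi)<\infty$ again by the Remark since $\alpha \le 1 \le \beta$. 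Hence the whole series is bounded by the constant $M_\alpha(\chi) + M_0(\chi)$, uniformly in $x$ and $w$, which yields $\|I_w^\chi f - f\|_\infty \le K(M_\alpha(\chi)+M_0(\chi))\,w^{-\alpha} = \mathcal{O}(w^{-\alpha})$.

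There is no serious obstacle here; the only point requiring a little care is the uniformity in $x$, which is handled by the substitution $v = x^w$ that absorbs all $x$-dependence into the supremum defining $M_\alpha(\chi)$ and $M_0(\chi)$. One should also note that the absolute convergence of the defining series of $(I_w^\chi f)(x)$ — needed to justify interchanging the sum with the estimates — follows from the same bound together with boundedness of $f$. I would remark that condition $(\chi_4)$ is not needed for this direct estimate; only $(\chi_1)$ and the finiteness of $M_\beta(\chi)$ (hence of $M_\alpha(\chi)$ and $M_0(\chi)$) are used.
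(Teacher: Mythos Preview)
Your argument is correct and follows the same overall strategy as the paper: use $(\chi_1)$ to subtract $f(x)$ inside the series, apply the log-H\"older bound to $|f(e^u)-f(x)|$, and control the resulting sum by absolute moments of $\chi$. The execution differs slightly: the paper integrates $|u-\log x|^\alpha$ over $[k/w,(k+1)/w]$ and then uses the convexity bound $|a+b|^{\alpha+1}\le 2^{\alpha}(|a|^{\alpha+1}+|b|^{\alpha+1})$, ending with a constant involving $M_{\alpha+1}(\chi)$, whereas you bound the integrand first by $(|k-w\log x|+1)^\alpha/w^\alpha$ and use subadditivity $(a+b)^\alpha\le a^\alpha+b^\alpha$, ending with $M_\alpha(\chi)+M_0(\chi)$. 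Your variant is marginally more economical, since $M_\alpha(\chi)<\infty$ for $0<\alpha\le 1$ follows from $(\chi_3)$ with any $\beta\ge 1$ via $|t|^\alpha\le 1+|t|^\beta$, while the paper's $M_{\alpha+1}(\chi)<\infty$ tacitly needs $\beta\ge\alpha+1$. One small point: the Remark you invoke is stated only for integer orders, so for non-integer $\alpha$ you should justify $M_\alpha(\chi)<\infty$ by the elementary inequality just mentioned rather than by citing the Remark directly.
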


\begin{proof} Consider 
\noindent $ |(I_{w}^{\chi}f)(x) - f(x)|$
\begin{eqnarray*}
&=& \Big|\sum_{k= -\infty}^{\infty} \chi(e^{-k}x^{w}) w \int_{k/w}^{{k+1}/w} [f(e^u) - f(x)]\ du \Big| \\
 &\leq & \sum_{k= -\infty}^{\infty} | \chi(e^{-k}x^{w})| w \int_{k/w}^{{k+1}/w} |f(e^u) - f(x)|\ du.
\end{eqnarray*}
Since $f \in L_\alpha,$ so we obtain 
 \begin{eqnarray*}
|(I_{w}^{\chi}f)(x) - f(x)|&\leq & \sum_{k= -\infty}^{\infty} | \chi(e^{-k}x^{w})| w \int_{k/w}^{{k+1}/w} |u - \log x|^\alpha du\\
& \leq & \frac{w}{(\alpha + 1)} \sum_{k= -\infty}^{\infty} | \chi(e^{-k}x^{w})| \left[ \ \Big| \frac{k+1}{w} - \log x \Big|^{\alpha +1} - \ \Big| \frac{k}{w} - \log x \Big|^{\alpha +1} \right].
\end{eqnarray*}
Since $|a+b|^{\alpha+1} \leq 2^{\alpha} \left( |a|^{\alpha} + |b|^{\alpha} \right)$ for $\alpha >0,$ we can write\\

\noindent $ |(I_{w}^{\chi}f)(x) - f(x)|$
\begin{eqnarray*}
& \leq & \frac{w}{(\alpha + 1)} \sum_{k= -\infty}^{\infty} | \chi(e^{-k}x^{w})| \left[ \ 2^{\alpha} \left( \Big| \frac{k}{w} - \log x \Big|^{\alpha +1} + \frac{1}{w^{\alpha+1}} \right) - \ \Big| \frac{k}{w} - \log x \Big|^{\alpha +1} \right]\\
& \leq & \frac{w}{(\alpha + 1)} \sum_{k= -\infty}^{\infty} | \chi(e^{-k}x^{w})| \ \left[ (2^{\alpha}-1)\Big| \frac{k}{w} - \log x \Big|^{\alpha +1}  + \frac{1}{w^{\alpha+1}} \right]\\
& \leq & \frac{w^{-\alpha}}{(\alpha + 1)} \left( (2^{\alpha} -1) M_{\alpha +1}(\chi) + M_{0}(\chi) \right).
\end{eqnarray*}
This completes the proof.
\end{proof}

We derive the saturation order for the Kantorovich exponential sampling series as follows.

\begin{thm} \label{xyz3}
Let $\chi$ be a kernel function such that $m_{1}^{\chi} \neq -1/2$ and let $f \in \mathcal{C}(\mathbb{R}^+)$ be such that 
$$ \|I_{w}^\chi f - f \|_\infty = o(w^{-1})  ~~~~ as~~~ w \to + \infty. $$ 
Then $f$ is constant on $\mathbb{R}^+.$
\end{thm}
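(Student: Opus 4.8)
The plan is to transfer the hypothesis, via Lemma~\ref{lemma3}, into a statement about the generalized exponential sampling series $S_w^{\bar\chi}$ built on the averaged kernel, and then to combine the Voronovskaya-type Corollary~\ref{xyz2} with the elementary observation that a family of Mellin-differentiable functions which converges uniformly and whose Mellin derivatives tend uniformly to $0$ must have a constant limit.

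First I would introduce the Mellin anti-derivative $F$ of $f$ as in \eqref{antimellin}; by Lemma~\ref{lemma2} we have $\theta F=f$, and since $f$ is bounded, $F$ has at most logarithmic growth, so $S_w^{\bar\chi}F$ is absolutely convergent (the kernel $\bar\chi$ satisfies $(\chi_3)$ with $\beta\ge 1$ by Lemma~\ref{lemma1}). Put
$$\phi_w(x):=w\Big[(S_w^{\bar\chi}F)(x e^{1/2w})-F(x)\Big],\qquad x\in\mathbb{R}^+ .$$
Since $\bar\chi$ fulfils $(\chi_1)$–$(\chi_4)$ with $m_1^{\bar\chi}=m_1^{\chi}$ (Lemma~\ref{lemma1}) and $\theta F=f\in\mathcal{C}(\mathbb{R}^+)$, Corollary~\ref{xyz2} applied to $F$ and the kernel $\bar\chi$ gives
$$\phi_w(x)\xrightarrow[w\to\infty]{}(2m_1^{\chi}+1)\,\frac{f(x)}{2}\qquad\text{uniformly on }\mathbb{R}^+ .$$

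Next I would differentiate $\phi_w$ in the Mellin sense. By Lemma~\ref{lemma3} the map $x\mapsto (S_w^{\bar\chi}F)(x e^{1/2w})$ is Mellin differentiable with $\theta\big[(S_w^{\bar\chi}F)(\cdot\, e^{1/2w})\big](x)=(\theta S_w^{\bar\chi}F)(x e^{1/2w})=(I_w^{\chi}f)(x)$, so together with $\theta F=f$ we obtain
$$\theta\phi_w(x)=w\big[(I_w^{\chi}f)(x)-f(x)\big],\qquad x\in\mathbb{R}^+ .$$
Thus the hypothesis $\|I_w^{\chi}f-f\|_\infty=o(w^{-1})$ is exactly the assertion $\|\theta\phi_w\|_\infty\to 0$ as $w\to\infty$. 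Moreover $\theta\phi_w$ is continuous (both $I_w^{\chi}f$ and $f$ are), hence $\phi_w$ is continuously differentiable with $\phi_w'(t)=t^{-1}(\theta\phi_w)(t)$. To conclude, fix $a,b\in\mathbb{R}^+$ with $a<b$ and integrate:
$$\phi_w(b)-\phi_w(a)=\int_a^b\frac{(\theta\phi_w)(t)}{t}\,dt ,\qquad\text{whence}\qquad |\phi_w(b)-\phi_w(a)|\le\|\theta\phi_w\|_\infty\,\log\frac{b}{a}\xrightarrow[w\to\infty]{}0 .$$
Letting $w\to\infty$ and invoking the uniform limit from the previous step yields $(2m_1^{\chi}+1)\big(f(b)-f(a)\big)=0$; since $m_1^{\chi}\neq-\tfrac12$, i.e. $2m_1^{\chi}+1\neq 0$, we get $f(b)=f(a)$, and as $a<b$ were arbitrary, $f$ is constant on $\mathbb{R}^+$.

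I expect the only genuinely delicate point to be the identity $\theta\phi_w=w(I_w^{\chi}f-f)$ on the whole of $\mathbb{R}^+$, i.e. the legitimacy of differentiating $S_w^{\bar\chi}F$ term by term: this rests on $\bar\chi$ being continuously differentiable with $\bar\chi'(t)=t^{-1}\big(\chi(t e^{1/2})-\chi(t e^{-1/2})\big)$, as recorded in \eqref{diff}, together with the decay furnished by $(\chi_3)$–$(\chi_4)$ and the logarithmic growth of $F$ — precisely the ingredients already underlying Lemma~\ref{lemma3}. Everything else is a direct assembly of Lemma~\ref{lemma2}, Lemma~\ref{lemma3} and Corollary~\ref{xyz2} with a one-line integration estimate.
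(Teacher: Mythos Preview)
Your argument is correct and rests on the same three ingredients as the paper (Lemma~\ref{lemma2}, Lemma~\ref{lemma3}, Corollary~\ref{xyz2}), but the assembly is genuinely different. The paper proceeds distributionally: it pairs $w[(I_w^\chi f)-f]$ against an arbitrary test function $\phi\in C_c^\infty(\mathbb{R}^+)$, uses Lemma~\ref{lemma3} to rewrite the integrand, integrates by parts in the Mellin sense so that the Mellin derivative falls on $\phi$, invokes Vitali's theorem to pass to the limit via Corollary~\ref{xyz2}, and concludes from $\int f\,(\theta\phi)\,\tfrac{dx}{x}=0$ for all $\phi$ that $f$ is constant. You instead work pointwise: you introduce $\phi_w(x)=w[(S_w^{\bar\chi}F)(xe^{1/2w})-F(x)]$, identify $\theta\phi_w=w(I_w^\chi f-f)$ directly from Lemma~\ref{lemma3}, and use the fundamental theorem of calculus to convert $\|\theta\phi_w\|_\infty\to 0$ into $\phi_w(b)-\phi_w(a)\to 0$, whence the uniform limit $(m_1^\chi+\tfrac12)f$ from Corollary~\ref{xyz2} is constant. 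Your route is more elementary --- it dispenses with test functions, Mellin integration by parts, Vitali's theorem, and the distributional lemma at the end --- while the paper's weak formulation would in principle extend more readily to rougher classes of $f$. The only step deserving a word of care is, as you note, the term-by-term differentiation of $S_w^{\bar\chi}F$; this is justified by \eqref{diff} together with $M_1(\chi)<\infty$ and the logarithmic growth of $F$, exactly as in Lemma~\ref{lemma3}.
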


\begin{proof}
Let $\phi \in C_{c}^\infty(\mathbb{R}^+) $ be fixed. We define 
\begin{equation}\label{1}
G_f(\phi): = w\int_{\mathbb{R}^+} \big{[(I_{w}^\chi f)-f(x)\big]}\phi(x) \frac{dx}{x}, \ \ \ \ w>0.
\end{equation}
We assume that $[a,b] \subset\mathbb{R}^+$ be such that the compact support of $\phi$ is properly contained in $[a,b],$ i.e. $supp(\phi) \subset[a,b].$ This gives $\phi(a) =0= \phi(b).$ So, the integral \eqref{1} reduces to 
\begin{equation*}
G_f(\phi) = w\int_{a}^{b} \big{[(I_{w}^\chi f)-f(x)\big]}\phi(x)\frac{dx}{x}, \ \ \ \ w>0.
\end{equation*}
Using Lemma \ref{lemma3}, we obtain
\begin{align*}
	 G_f(\phi) &= w\int_{a}^{b} \big{[(\theta S_{w}^{\bar{\chi}}F)(xe^{1/2w})-f(x)\big]}\phi
	(x)\frac{dx}{x} \\
	&= w\int_{a}^{b} \big{[(\theta S_{w}^{\bar{\chi}}F)(xe^{1/2w})-(\theta F)(x)\big]}\phi
	(x)\frac{dx}{x},
\end{align*} 
where $F$ is the Mellin anti-derivative of $f,$ i.e. 
$\displaystyle F(x) = \int_{0}^{x}f(t)\frac{dt}{t},~~x \in \mathbb{R}^+.$ Using integration by parts in the Mellin-sense, we obtain 
\begin{align*}
G_f(\phi) = w \left\{\phi(x) \int \big{[(\theta S_{w}^{\bar{\chi}}F)(xe^{1/2w})-(\theta F)(x)\big]}\frac{dx}{x} \right \}_{a}^{b}
	 \\- w\int_{a}^{b} \big{[(S_{w}^{\bar{\chi}}F)(xe^{1/2w})- F(x)\big]}(\theta \phi)(x)\frac{dx}{x}.
\end{align*}
Since $\phi(a)= 0= \phi(b),$ we have
\begin{equation*}
	G_f(\phi) =	- w\int_{a}^{b} \big{[(S_{w}^{\bar{\chi}}F)(xe^{1/2w})- F(x)\big]}(\theta \phi)(x)\ \frac{dx}{x}.
\end{equation*}
Applying the limit $w \rightarrow \infty$ on both sides of the above equation and using Vitali's convergence theorem, we get
\begin{equation}\label{2}
\lim\limits_{w\to \infty}G_f(\phi) = -(m_{1}^{\chi} + 1/2) \int_{a}^{b} (\theta \phi)(x) (\theta F)(x)\ \frac{dx}{x}.
\end{equation}
Since $(I_{w}^{\chi}f)$ converges to $f$ uniformly as $w \to \infty,$ we obtain
$$ 0=-(m_{1}^{\chi} + 1/2) \int_{a}^{b} (\theta \phi)(x)(\theta F)(x)\frac{dx}{x}.$$
As $(\theta F)(x)=f(x),$ we have
$$ 0= -(m_{1}^{\chi} + 1/2) \int_{a}^{b} (\theta \phi)(x) f(x) \frac{dx}{x},$$
Since $\phi \in C_{c}^{\infty}(\mathbb{R}^{+})$ is arbitrary function, $f$ is constant in $\mathbb{R}^{+}.$
Hence, the highest order of convergence that $(I_{w}^\chi f)$ can achieve for $f \in \mathcal{C}(\mathbb{R}^+)$ is one, provided $m_1(\chi,u) \neq -1/2.$ Hence, the result is proved. 
\end{proof}

For $f \in \mathcal{C}(\mathbb{R}^+),$ the logarithmic modulus of continuity is defined as
$$ \omega(f,\delta):= \sup \{|f(u)-f(v)|: \  |\log u-\log v| \leq \delta,\  \ \delta \in \mathbb{R}^{+}\} .$$
For every $\delta > 0$ and $u,v \in \mathbb{R}^+,$ \ $\omega$ \ has the following properties:
\begin{itemize}
\item[(a)] $\omega(f, \delta) \rightarrow 0$ as $\delta \rightarrow 0.$

\item[(b)] $\displaystyle|f(u) - f(v)| \leq \omega(f,\delta) \left( 1+ \frac{|\log u - \log v|}{\delta} \right).$
\end{itemize}
For more details on modulus of continuity, we refer to \cite{mamedeo,bardaro9}. \\

Now we prove the proposed inverse approximation result for the Kantorovich exponential sampling series $I_{w}^{\chi}.$

\begin{thm} \label{xyz5} 
Let $\chi$ be differentiable and $M_{1}(\theta \chi) < +\infty$ and $f \in \mathcal{C}(\mathbb{R}^+)$ be such that 
$$ \|I_{w}^{\chi}f - f\|_\infty = \mathcal{O}(w^{-\alpha}) ~~~as~~~ w \to \infty $$
with $0< \alpha \leq 1.$ Then $f$ belongs to $L_\alpha.$
\end{thm}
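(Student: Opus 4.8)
\emph{Sketch of the intended proof.} The plan is to convert the hypothesis into the estimate $\omega(f,\delta)=\mathcal{O}(\delta^{\alpha})$ for the logarithmic modulus of continuity; by the definition of $L_{\alpha}$ this is precisely the assertion. Three ingredients are needed: a Bernstein-type inequality for the Mellin derivative of $I_{w}^{\chi}f$, a decomposition of $f(y)-f(x)$ through $I_{w}^{\chi}f$ combined with the Mellin mean value theorem, and the Berens--Lorentz lemma applied to the resulting functional inequality.

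First I would prove the Bernstein-type inequality
$$\|\theta(I_{w}^{\chi}f)\|_{\infty}\le C\,w\,\omega\!\left(f,\tfrac{1}{w}\right),\qquad w>0,$$
with $C$ depending only on $\chi$. Since $\chi$ is differentiable and $M_{0}(\theta\chi)<\infty$ follows from $M_{1}(\theta\chi)<\infty$ (the Remark), the series for $I_{w}^{\chi}f$ can be differentiated term by term --- the differentiated series converges uniformly on compact subsets of $\mathbb{R}^{+}$ by a Weierstrass comparison with $\tfrac{w}{x}\|f\|_{\infty}M_{0}(\theta\chi)$ --- so $I_{w}^{\chi}f\in C^{(1)}(\mathbb{R}^{+})$ and
$$\theta(I_{w}^{\chi}f)(x)=w\sum_{k=-\infty}^{\infty}(\theta\chi)(e^{-k}x^{w})\,c_{k},\qquad c_{k}:=w\int_{k/w}^{(k+1)/w}f(e^{u})\,du.$$
Applying $\theta$ to the identity $\sum_{k}\chi(e^{-k}u)\equiv 1$ from $(\chi_{1})$ yields $\sum_{k}(\theta\chi)(e^{-k}u)=0$, so one may subtract $f(x)\sum_{k}(\theta\chi)(e^{-k}x^{w})=0$; then $|c_{k}-f(x)|\le\omega\big(f,|k/w-\log x|+1/w\big)$, and by property (b) of $\omega$ this is at most $\omega(f,1/w)\,(2+|k-w\log x|)$. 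Summing against $|(\theta\chi)(e^{-k}x^{w})|$ and using the finiteness of $M_{0}(\theta\chi)$ and $M_{1}(\theta\chi)$ gives the inequality with $C=2M_{0}(\theta\chi)+M_{1}(\theta\chi)$.

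Next, for arbitrary $x,y\in\mathbb{R}^{+}$ I would use the splitting
$$f(y)-f(x)=\big(f-I_{w}^{\chi}f\big)(y)+\big((I_{w}^{\chi}f)(y)-(I_{w}^{\chi}f)(x)\big)+\big(I_{w}^{\chi}f-f\big)(x).$$
The outer two terms are each at most $\|I_{w}^{\chi}f-f\|_{\infty}\le A\,w^{-\alpha}$ for $w\ge w_{0}$ by hypothesis, while the middle term is bounded, via the mean value theorem applied to $t\mapsto(I_{w}^{\chi}f)(e^{t})$ (whose derivative is $\theta(I_{w}^{\chi}f)(e^{t})$), by $|\log y-\log x|\,\|\theta(I_{w}^{\chi}f)\|_{\infty}$. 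Inserting the Bernstein inequality and taking the supremum over $|\log x-\log y|\le\delta$, we get
$$\omega(f,\delta)\le 2A\,w^{-\alpha}+C\,\delta\,w\,\omega\!\left(f,\tfrac{1}{w}\right)\qquad(w\ge w_{0}).$$
With $\eta=1/w$ this reads $\omega(f,\delta)\le 2A\,\eta^{\alpha}+C\,\tfrac{\delta}{\eta}\,\omega(f,\eta)$ for $0<\eta\le\eta_{0}$; restricted to $0<\delta\le\eta$ it is exactly the hypothesis of the Berens--Lorentz lemma with exponent $\gamma=1$. Since $\omega(f,\cdot)$ is bounded by $2\|f\|_{\infty}$ and non-decreasing, that lemma yields $\omega(f,\delta)\le C'\delta^{\alpha}$ for small $\delta$, hence for all $\delta>0$ after adjusting the constant, so $f\in L_{\alpha}$.

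The main obstacle is the Bernstein step: besides justifying the term-by-term Mellin differentiation, one must extract the factor $\omega(f,1/w)$ --- not merely $\|f\|_{\infty}$ --- from the differentiated series, which is exactly where the vanishing moment $\sum_{k}(\theta\chi)(e^{-k}u)=0$ and the assumption $M_{1}(\theta\chi)<\infty$ are used. A further delicate point is the endpoint $\alpha=1$, where $\gamma=\alpha$ is the borderline case of the Berens--Lorentz lemma; there one must invoke the sharp form of that lemma (or run an additional iteration) to still reach $\omega(f,\delta)=\mathcal{O}(\delta)$.
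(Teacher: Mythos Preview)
Your approach mirrors the paper's almost step for step: both derive the Bernstein-type estimate $\|\theta(I_w^{\chi}f)\|_\infty \le C\,w\,\omega(f,1/w)$ from the vanishing-moment identity $\sum_k(\theta\chi)(e^{-k}u)=0$ together with $M_0(\theta\chi),M_1(\theta\chi)<\infty$, use the same three-term decomposition of $f(x)-f(y)$ with the Mellin fundamental theorem supplying $|(I_w^{\chi}f)(x)-(I_w^{\chi}f)(y)|\le |\log x-\log y|\,\|\theta(I_w^{\chi}f)\|_\infty$, and arrive at the same functional inequality $\omega(f,\delta)\le 2K\,w^{-\alpha}+C\,w\delta\,\omega(f,1/w)$. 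The only divergence is in closing this last inequality: the paper substitutes $\omega(f,1/w)\le\omega(f,\delta)$ (valid for $1/w<\delta$ by monotonicity) and solves algebraically for $\omega(f,\delta)$, whereas you invoke the Berens--Lorentz lemma. Your route is the more standard and arguably the more robust of the two --- the paper's algebraic step tacitly needs the constant in front of $w\delta\,\omega(f,1/w)$ to be small enough for the subtraction to make sense --- and your flagging of $\alpha=1$ as the delicate borderline case is well placed.
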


\begin{proof} 
Since $\|I_{w}^{\chi}f - f\|_\infty = \mathcal{O}(w^{-\alpha}) ~~~as~~~ w \to \infty,$ there exist $K$ and $w_0$ such that 
$$\|I_{w}^{\chi}f - f\|_\infty \leq K w^{-\alpha}, \ \ \ \mbox{for every}\ \ w>w_0.$$
Now for every fixed $x,y\in\mathbb{R}^+$ and for $x \neq y,$ we can write 
\begin{align*}
	|f(x)-f(y)| &= |f(x)-(I_{w}^{\chi}f)(x) + (I_{w}^{\chi}f)(x)-f(y)+(I_{w}^{\chi}f)(y)-(I_{w}^{\chi}f)(y)| \\
	&\leq |f(y)-(I_{w}^{\chi}f)(y)| + |f(x)-(I_{w}^{\chi}f)(x)| + |(I_{w}^{\chi}f)(x)-(I_{w}^{\chi}f)(y)| \\
	&\leq Kw^{-\alpha} + |(I_{w}^{\chi}f)(x) - (I_{w}^{\chi}f)(y)| +Kw^{-\alpha} \\
	&\leq 2Kw^{-\alpha} + |(I_{w}^{\chi}f)(x)- (I_{w}^{\chi}f)(y)| \\
	&\leq 2Kw^{-\alpha} + \Bigg| \int_{y}^{x}(\theta I_{w}^{\chi}f)(t) \frac{dt}{t}\Bigg|  =: I_{1}+I_{2}.
\end{align*}
Now we estimate $I_2.$ It is easy to see that
\begin{equation}\label{4}
(\theta I_{w}^{\chi}f)(x) = \sum_{k=-\infty}^{\infty} (\theta {\chi})(e^{-k}x^w)\ w^{2} \int_{k/w}^{{k+1}/w} f(e^u) \ du.
\end{equation}
Let $\textbf{1}$ represents the constant function such that $\textbf{1}(x)=1,\ \forall x \in \mathbb{R}^{+}.$ We can easily observe that $(I_{w}^{\chi} \textbf{1})(x)= 1,\ \forall x \in \mathbb{R}^{+}.$ This gives 
$$ (\theta I_{w}^{\chi} \textbf{1})(x)= w \sum_{k=-\infty}^{\infty} \chi^{'} (e^{-k}x^w) \ (e^{-k}x^w)=0.$$
Again we have
\begin{eqnarray*}
|(\theta I_{w}^{\chi}f)(x)| &=& \big|(\theta I_{w}^{\chi}f)(x) - f(x) \ (\theta I_{w}^{\chi} \textbf{1})(x) \big| \\
&=& \Big| w \sum_{k= -\infty}^{\infty} (\theta \chi)(e^{-k}x^w)\ w \int_{k/w}^{{k+1}/w} f(e^u) \ du - w f(x) \sum_{k= -\infty}^{\infty} (\theta \chi)(e^{-k}x^w) \big|\\
&=& w^{2} \Big| \sum_{k= -\infty}^{\infty} (\theta \chi)(e^{-k}x^w) \int_{k/w}^{{k+1}/w} (f(e^u)-f(x)) \ du \Big| \\
& \leq & w^{2} \sum_{k= -\infty}^{\infty} |(\theta \chi)(e^{-k}x^w)| \int_{k/w}^{{k+1}/w} |f(e^u)-f(x)| \ du.
\end{eqnarray*}
Using property (b) of $\omega$ and choosing $\delta=1/w$ to get
\begin{eqnarray*}
|(\theta I_{w}^{\chi}f)(x)| & \leq & w^{2} \sum_{k= -\infty}^{\infty} |(\theta \chi)(e^{-k}x^w)| \int_{k/w}^{{k+1}/w} \omega(f,1/w) \ (1+ w \ |u-\log x|)\ du \\
& \leq & w^{3} \ \omega(f,1/w)\  \sum_{k= -\infty}^{\infty} |(\theta \chi)(e^{-k}x^w)| \int_{k/w}^{{k+1}/w} |u- \log x|\ du \\&&
+w \ \omega(f,1/w)\  \sum_{k= -\infty}^{\infty} |(\theta \chi)(e^{-k}x^w)| \\
& \leq & w \ \omega(f,1/w) \ M_{0}(\theta \chi)+ \frac{w}{2} \ \omega(f,1/w) \ (M_{0}(\theta \chi)+2 M_{1}(\theta \chi))\\
& \leq & \frac{w}{2} \ \omega(f,1/w) \left( 3 M_{0}(\theta \chi)+ 2 M_{1}(\theta \chi) \right).
\end{eqnarray*}
This gives
\begin{eqnarray*}
|f(x)-f(y)| & \leq & 2K w^{-\alpha} + \frac{w}{2} \ \omega(f,1/w)\int_{y}^{x} \left( 3 M_{0}(\theta \chi)+ 2 M_{1}(\theta \chi) \right) \frac{dt}{t} \\
& \leq & 2K w^{-\alpha} + \frac{w}{2} \ \omega(f,1/w) \left( 3 M_{0}(\theta \chi)+ 2 M_{1}(\theta \chi) \right) |\log x - \log y|.
\end{eqnarray*}
Since $M_{0}(\theta \chi)$ and $M_{1}(\theta \chi)$ are finite, we define $N:= \mbox{max} \{M_{0}(\theta \chi),M_{1}(\theta \chi) \}$ to obtain
$$ \omega(f,\delta) \leq 2K w^{-\alpha}+ w \delta \ \omega(f,1/w) \left( \frac{5N}{2} \right).$$
For any fixed $\delta >0,$ there exists sufficiently large $w$ such that $\frac{1}{w} < \delta.$ Now, in view of monotonicity of $\omega,$ we write
$$ \omega(f,1/w) < \omega (f,\delta).$$
This gives $\omega(f,\delta) \leq H \ w^{-\alpha} ,$ where $ \displaystyle H := \frac{4K}{5-2N}.$ Hence, we conclude that $f \in L_{\alpha},\ 0<\alpha \leq 1.$ Thus, we obtain the desired result. 
\end{proof}

\section{Examples of kernel} \label{section4}

In this section, we discuss some examples of kernel functions satisfying the assumptions of our theory.
\subsection{Mellin B-spline kernel}
 We start with the Mellin B-spline kernel of order $n \in \mathbb{N}$ which are defined by (see \cite{bardaro7})
$$\overline{{B}_{n}}(t):= \frac{1}{(n-1)!} \sum_{j=0}^{n} (-1)^{j} {n \choose j} \bigg( \frac{n}{2}+\log t-j \bigg)_{+}^{n-1},\,\,\,\ t\in \mathbb{R}^{+},$$ where $(x)_{+} := \max \{x,0\},$ $ x \in \mathbb{R}.$ We observe that for $t\in \mathbb{R}^{+},$ $B_n(\log t)=\overline{{B}_{n}}(t),$ where 
$${B}_{n}(x):= \frac{1}{(n-1)!} \sum_{j=0}^{n} (-1)^{j} {n \choose j} \bigg( \frac{n}{2}+x-j \bigg)_{+}^{n-1},\,\,\,\ x\in \mathbb{R},$$ denotes the classical $B$-splines of order $n \in \mathbb{N}.$ Also, we noted $B_n(x)=\overline{{B}_{n}}(e^x),$ $x\in \mathbb{R}.$
Now using the definition of the Mellin transform, we have 
$$[\overline{{B}_{n}}]^{\wedge}_M (is)= \int_0^{\infty} \overline{{B}_{n}}(t)\ t^{is}\ \frac{dt}{t}, \ \ \ s\in\mathbb{R}.$$ Subtituting $u=\log t,$ we easily obtain 
\begin{eqnarray*}
[\overline{{B}_{n}}]^{\wedge}_M (is)&=&\int_{-\infty}^{\infty} \overline{{B}_{n}}(e^u)\ e^{isu}\ du
=\int_{-\infty}^{\infty} {B}_{n}(u)\ e^{isu}\ du
= \widehat{B_n}(-s),
\end{eqnarray*}
where $\widehat{f}(u) := \displaystyle\int_{-\infty}^{\infty} f(x) e^{-iux}dx, u \in \mathbb{R}$ denotes the Fourier transform of the function $f.$ Now, let $f(x):=\displaystyle\sum_{k=- \infty}^{\infty}\overline{{B}_{n}}(e^{k}x).$ This $f$ is a recurrent function with fundamental interval $[1,e].$ That is $f(ex)=f(x),$ $\forall x\in\mathbb{R}^{+}.$ The Mellin-Fourier cofficient $m_{k}(f)$ of $f$ is given by 
\begin{eqnarray*}
m_{k}(f):=\displaystyle\int_{1}^{e}f(x)x^{2k\pi i}\frac{dx}{x}
=\displaystyle\int_{1}^{e}\sum_{j=- \infty}^{\infty}\overline{{B}_{n}}(e^{j}x)x^{2k\pi i}\frac{dx}{x}
=\sum_{j=- \infty}^{\infty}\displaystyle\int_{1}^{e}\overline{{B}_{n}}(e^{j}x)x^{2k\pi i}\frac{dx}{x}.
\end{eqnarray*}
Substituting $u=e^{j}x,$ we obtain 
\begin{eqnarray*}
m_{k}(f)=\sum_{j=- \infty}^{\infty}\displaystyle\int_{e^j}^{e^{j+1}}\overline{{B}_{n}}(u){\left(\frac{u}{e^j}\right)}^{2k\pi i}\frac{du}{u}
=\displaystyle\int_{0}^{\infty}\overline{{B}_{n}}(u){u}^{2k\pi i}\frac{du}{u}=[\overline{{B}_{n}}]^{\wedge}_M (2k\pi i).
\end{eqnarray*}
Therefore, we get the following Mellin Poisson summation formula
\begin{eqnarray} \label{summation}
\displaystyle\sum_{k=- \infty}^{\infty}\overline{{B}_{n}}(e^{k}x)=\sum_{k=- \infty}^{\infty}[\overline{{B}_{n}}]^{\wedge}_M (2k\pi i)x^{-2k\pi i}. 
\end{eqnarray}
It is easy to see that 
\begin{eqnarray} \label{splinefourier}
[\overline{{B}_{n}}]^{\wedge}_M (c+is) = \bigg( \frac{\sin(\frac{s}{2})}{(\frac{s}{2})} \Bigg)^{n},\ \ \hspace{0.5cm} s\neq 0, c=0.
\end{eqnarray}
Therefore, we have 
\begin{equation} \label{e4}
[\overline{{B}_{n}}]^{\wedge}_M (2k\pi i)=\widehat{B_n}(-2k\pi i)=
     \begin{cases}
     {1,} &\quad\text{if} \ \  {k=0} \\
     {0,} &\quad \ \  {\text{otherwise.}}\\
   \end{cases}
\end{equation}
Using the Mellin Poisson summation formula (\ref{summation}), we get 
$$\displaystyle\sum_{k=- \infty}^{\infty}\overline{{B}_{n}}(e^{k}x)=1, \ \ \forall x\in\mathbb{R}^{+}.$$ 
Hence $ \overline{{B}_{n}}$ satisfies the condition $(\chi_{1}).$\\

Now we establish the condition $(\chi_{2}).$ Again using the Mellin transform and by differentiating under the sign of the integral, we get 
$$\frac{d}{ds}\left([\overline{{B}_{n}}]^{\wedge}_M (is)\right)=\frac{d}{ds}\left(\displaystyle\int_{0}^{\infty}\overline{{B}_{n}}(t){t}^{is}\frac{dt}{t}\right)=\int_{0}^{\infty}\overline{{B}_{n}}(t){t}^{is}(i\log t)\frac{dt}{t}.$$
Similarly, we can easily obtain 
$$\frac{d^{j}}{ds^{j}}\left([\overline{{B}_{n}}]^{\wedge}_M (is)\right)=\int_{0}^{\infty}\overline{{B}_{n}}(t){t}^{is}(i\log t)^{j}\frac{dt}{t}=(i)^{j}[\overline{f_j}]^{\wedge}_M (is),$$
where $f_{j}(t)=\overline{{B}_{n}}(t)(\log t)^{j}.$ 
Thus, we get 
\begin{eqnarray}\label{Poisson}
\displaystyle\sum_{k=- \infty}^{\infty}\overline{{B}_{n}}(e^{-k}x)(k-\log x)^j=\sum_{k=- \infty}^{\infty}(-i)^{j}\frac{d^{j}}{ds^{j}}\overline{{B}_{n}}]^{\wedge}_M (2k\pi i)x^{-2k\pi i}.
\end{eqnarray}
Again from (\ref{splinefourier}), we obtain
\begin{eqnarray*}
\dfrac{d}{ds} \left( [\overline{{B}_{n}}]^{\wedge}_M (is)\right)= n \bigg( \frac{\sin(\frac{s}{2})}{(\frac{s}{2})} \Bigg)^{n-1} \left( \dfrac{s \cos(s/2)-2 \sin(s/2)}{s^{2}}\right),\ \ \hspace{0.5cm} s \neq 0,
\end{eqnarray*}
which in-turn gives 
$$\displaystyle \dfrac{d}{ds} \left( [\overline{{B}_{n}}]^{\wedge}_M (is)\right) (2k \pi i)=0,\ \ k \in \mathbb{Z}.$$ Therefore, we obtain 
$m_{1}(\overline{{B}_{n}},x)=0,\ \forall n \in \mathbb{N},$ by using (\ref{Poisson}).  Hence the condition $(\chi_{2})$ is also satisfied.\\

As $\overline{{B}_{n}}$ is compactly supported, there exists $\nu>0$ such that $supp(\overline{{B}_{n}})\subseteq [e^{-\nu}, e^{\nu}].$ Thus, we get 
$|\{k:e^{-\nu}\leq e^{-k}u \leq e^{\nu}\}|\leq 2 [\nu]+1,$ $\forall u\in\mathbb{R}^+,$ where $[.]$ denotes the integer part. Hence we obtain 
\begin{equation*}
\sum_{k=-\infty}^{\infty} |\overline{{B}_{n}}(e^{-k} u)| \ |k- \log u|^{\beta} \leq (2 [\nu]+1)\left(\sup_{u\in\mathbb{R}^+}\overline{{B}_{n}}(u)\right)\nu^{\beta}<\infty.
\end{equation*}
This establishes the condition $(\chi_{3}).$\\

Again using the fact that 
$supp(\overline{{B}_{n}})\subseteq [e^{-\nu}, e^{\nu}],$ for some $\nu>0.$ Choosing $w\gamma>\nu,$ we get 
$\displaystyle \sum_{|k-w\log x|>w\gamma} |\overline{{B}_{n}}(e^{-k} x^w)| \ |k- w\log x|=0,$ $\forall x\in\mathbb{R}^+.$ Therefore, the condition $(\chi_{4})$ is satisfied. To verify the conditions which are used in the hypothesis of the theorem, we consider the third order Mellin $B$-spline kernel by 
\begin{equation*}
\overline{{B}_{3}}(x) =
     \begin{cases}
      {-\frac{1}{2} \left(\frac{3}{2}+ \log x \right)^{2},} &\quad\text{} \ \ { \text{$e^{-3/2} < x < e^{-1/2},$}}\\
      {\frac{3}{4} -  \log^{2} x ,} &\quad\text{} \ \ {e^{-1/2} < x < e^{1/2},}\\
 {-\frac{1}{2} \left(\frac{3}{2}- \log x \right)^{2},} &\quad\text{} \ \  { \text{$e^{1/2} < x < e^{3/2},$}}\\
  {0,} &\quad\text{} \ \  { \text{otherwise}.}\\
\end{cases}
\end{equation*}

The Mellin derivative of $\overline{{B}_{3}}(x)$ is given by 
\begin{equation*}
(\theta \overline{{B}_{3}})(x) =
     \begin{cases}
      {-\left(\frac{3}{2}+ \log x \right),} &\quad\text{} \ \  { \text{$e^{-3/2} < x < e^{-1/2},$}}\\
      { - 2 \log x ,} &\quad\text{} \ \  {e^{-1/2} < x < e^{1/2},}\\
 {\frac{3}{2}- \log x ,} &\quad\text{} \ \  { \text{$e^{1/2} < x < e^{3/2},$}}\\
  {0,} &\quad\text{} \ \ \  { \text{otherwise}.}\\
\end{cases}
\end{equation*}
Evidently, $supp(\theta \overline{{B}_{3}})\subseteq \left(e^{-3/2},e^{3/2} \right).$ Hence, $M_\beta(\theta \overline{{B}_{3}})$ are finite. So it satisfies the assumption of Theorem \ref{xyz5}. Moreover, we have $m_{1}^{\overline{{B}_{3}}} := m_{1}(\overline{{B}_{3}},x) = 0,\ \forall x \in \mathbb{R}^{+}.$ Thus, $\overline{{B}_{3}}(x)$ also satisfies the assumption of Theorem \ref{xyz3}.
\subsection{Mellin Jackson kernel}

Next we consider Mellin Jackson kernel. For $c \in \mathbb{R}, \alpha \geq 1,n \in \mathbb{N}$ and $x \in \mathbb{R}^{+},$ the Mellin Jackson kernel is defined by (see \cite{bardaro7})
$$ \overline{J_{\alpha,n}}(x):= C_{\alpha,n}\ x^{-c} \textit{sinc}^{2n} \left(\frac{\log x}{2 \alpha n \pi} \right),$$
where $\displaystyle C^{-1}_{\alpha,n} := \int_{0}^{\infty} \textit{sinc}^{2n} \left(\frac{\log x}{2 \alpha n \pi} \right)\frac{dx}{x}$ and
$
sinc(u) =
     \begin{cases}
      {\dfrac{\sin \pi u}{\pi u},} &\quad\text{} \ \  { u \neq 0}\\
      { 1,} &\quad\text{} \ \  {u=0}.\\
 \end{cases}
$

It is evident that 
$  \overline{J_{\alpha,n}}(x) = x^{-c} {J_{\alpha,n}}(\log x),$ where ${J_{\alpha,n}}$ represents the generalized Jackson kernel (see \cite{k2007}) given by
$$ {J_{\alpha,n}}(x):= c_{\alpha,n} \textit{sinc}^{2n} \left(\frac{x}{2 \alpha n \pi} \right),$$
where $\displaystyle c^{-1}_{\alpha,n} := \int_{-\infty}^{\infty} \textit{sinc}^{2n} \left(\frac{x}{2 \alpha n \pi} \right)dx \ $ is constant. Now we have 
\begin{eqnarray*}
\| \overline{J_{\alpha,n}} \|_{X_c} &=& \int_0^{\infty} | \overline{J_{\alpha,n}}(x)| x^{c}\  \frac{dx}{x}=\int_0^{\infty} |{J_{\alpha,n}}(\log x)|\frac{dx}{x}=1.
\end{eqnarray*}
Hence $ \overline{J_{\alpha,n}}(x) \in X_c$ and therefore its Mellin transform is well defined. Now, we obtain 

\begin{eqnarray*}
[ \overline{J_{\alpha,n}}]^{\wedge}_M (c+iv)=\int_{0}^{\infty}  \overline{J_{\alpha,n}}(x)x^{c+iv}\frac{dx}{x}
=C_{\alpha,n}\int_{0}^{\infty}x^{iv}\frac{\textit{sin}^{2n} \left(\frac{\log x}{2 \alpha n} \right)}{\left(\frac{\log x}{2 \alpha n }\right)^{2n}}\frac{dx}{x}.
\end{eqnarray*}
On substituting $u=\log x,$ we obtain 
\begin{eqnarray*}
[ \overline{J_{\alpha,n}}]^{\wedge}_M (c+iv)&=&\frac{C_{\alpha,n}}{(n \alpha)^{2n}}\int_{-\infty}^{\infty}e^{iuv}\left(\widehat{\chi}_{[-\frac{1}{2n\alpha},\frac{1}{2n\alpha}]}(u)\right)^{2n}du\\
&=&\frac{C_{\alpha,n}}{(n \alpha)^{2n}}
\left({\chi}_{[-\frac{1}{2n\alpha},\frac{1}{2n\alpha}]}\ast {\chi}_{[-\frac{1}{2n\alpha},\frac{1}{2n\alpha}]}\ast...\ast {\chi}_{[-\frac{1}{2n\alpha},\frac{1}{2n\alpha}]}\right)(u), \ \ (2n\ \ times),
\end{eqnarray*}
where $\ast$ denotes the convolution. From the above relation, we see that $supp [ \overline{J_{\alpha,n}}]^{\wedge}_M\subseteq [-\frac{1}{\alpha},\frac{1}{\alpha}],$ hence we get $[ \overline{J_{\alpha,n}}]^{\wedge}_M (c+iv)=0,$ for $|v|>\frac{1}{\alpha}.$ Thus $ \overline{J_{\alpha,n}}$ is Mellin band-limited. Further, we observe that $[ \overline{J_{\alpha,n}}]^{\wedge}_M (0)=1$ and $[ \overline{J_{\alpha,n}}]^{\wedge}_M (2k\pi i)=0,$ for $k\neq 0.$ Hence using the Mellin-Poisson summation formula, we get 
\begin{eqnarray*}
\displaystyle\sum_{k=- \infty}^{\infty} \overline{J_{\alpha,n}}(e^{k}x)=\sum_{k=- \infty}^{\infty}[ \overline{J_{\alpha,n}}]^{\wedge}_M (2k\pi i)x^{-2k\pi i}=1.
\end{eqnarray*}
Hence $(\chi_{1})$ is satisfied.\\

In view of the Mellin transform, we obtain 
\begin{eqnarray*}
\frac{d}{dv}\left([\overline{J_{\alpha,n}}]^{\wedge}_M (0)\right)&=&\int_{0}^{\infty}  \overline{J_{\alpha,n}}(u)(i\log u)\frac{du}{u}= i C_{\alpha,n}\int_{0}^{\infty}\frac{\textit{sin}^{2n} \left(\frac{\log u}{2 \alpha n } \right)}{\left(\frac{\log u}{2 \alpha n }\right)}\log u \frac{du}{u}.
\end{eqnarray*}
Hence, we obtain $\dfrac{d}{dv}\left([{J_\alpha,n}]^{\wedge}_M (0)\right)=0.$ Thus from the Mellin-Poisson summation formula, we get $m_1( \overline{J_{\alpha,n}},u)=0.$ Therfore, the condition $(\chi_{2})$ is established.
Further, we see that $m_{1}^{ \overline{J_{\alpha,n}}} \neq -\dfrac{1}{2},$ the condition of Theorem \ref{xyz3} is fulfilled. \\

We now show that $(\chi_{4})$ is satisfied, i.e. 
$\displaystyle\lim_{w\rightarrow \infty} \sum_{|k-w\log x|>  w\gamma} | \overline{J_{\alpha,n}}(e^{-k} x^w)| \ |k- w\log x|=0 $ uniformly on $\mathbb{R}^{+}.$ Let $\epsilon>0$ and $n>1.$ Then, there exists $N \in \mathbb{Z}$ such that $\displaystyle \sum_{k>N} \frac{1}{k^{2n-1}} < \epsilon.$ For $w\gamma >N,$  we can write\\

\noindent 
$\displaystyle\sum_{|k-w\log x|>w\gamma} | \overline{J_{\alpha,n}}(e^{-k} x^w)| \ |k- w\log x|$
\begin{eqnarray*}
&=& \left( \sum_{k-w\log x >w\gamma} + \sum_{k-w\log x < -w\gamma} \right) | \overline{J_{\alpha,n}}(e^{-k} x^w)| \ |k- w\log x| \\
&:=& S_{1}+S_{2}.
\end{eqnarray*}
First we estimate $S_{1}.$ We have
\begin{eqnarray*}
S_{1} &=& \sum_{k-w\log x >w\gamma} C_{\alpha,2 n} sinc \left(\frac{|w\log x-k|}{2 \alpha n\pi}\right)^{2n} |k-w\log x| \\
& \leq & \sum_{k-w\log x >w\gamma} C_{\alpha,2 n} (2 \alpha n)^{2n} \frac{1}{|k-w\log x|^{2n-1}}\\
&\leq &  C_{\alpha,2 n} (2 \alpha n )^{2n} \sum_{k >N} \frac{1}{k^{2n-1}} 
< C_{\alpha,2 n} (2 \alpha n)^{2n} \epsilon. 
\end{eqnarray*}
Similarly, for $S_{2},$ we obtain 
\begin{eqnarray*}
S_{2}
& \leq & \sum_{k-w\log x <-w\gamma} C_{\alpha,2 n} (2 \alpha n )^{2n} \frac{1}{|k-w\log x|^{2n-1}}\\
&\leq &  C_{\alpha,2 n} (2 \alpha n )^{2n} \sum_{k=1}^{\infty} \frac{1}{(N+k)^{2n-1}} \\
&<& C_{\alpha,2 n} (2 \alpha n )^{2n} \epsilon. 
\end{eqnarray*}
On combining $S_1-S_2,$ we get $S < 2 C_{\alpha,2 n} (2 \alpha n)^{2n} \epsilon,$ for $n>1.$ Hence $(\chi_{4})$ is satisfied for $n>1.$\\

Now we establish the condition $(\chi_{3}).$
Let $u \in \mathbb{R}^+$ and $\beta < 2n -1, c=0.$ Then $\exists$ \ $k_{0} \in \mathbb{Z}$ such that $k_{0} \leq \log u < k_{0}+1.$ This gives
$|\log u-k| \geq |k-k_{0}|,$ if $k<k_{0}$ and $|\log u-k| > |k-(k_{0}+1)|,$ if $k>k_{0}+1.$ Now, we have 
\begin{eqnarray*}
\sum_{k=-\infty}^{\infty} |{ \overline{J_{\alpha,n}}}(e^{-k}u)| \ |k- \log u|^{\beta}  &=&
\left( \sum_{k <k_{0}} + \sum_{k=k_{0},k_{0}+1}+\sum_{k >k_{0}} \right) | \overline{J_{\alpha,n}}(e^{-k} u)| \ |k- \log u|^{\beta} \\
&:=& S_{1}^{'}+S_{2}^{'}+S_{3}{'}.
\end{eqnarray*}
Using definition of $ \overline{J_{\alpha,n}},$ $S_{1}^{'}$ is estimated by 
\begin{eqnarray*}
S_{1}^{'} &=& C_{\alpha,n} \sum_{k<k_{0}} \sin^{2n} \left( \frac{|k-\log u|}{2n \alpha} \right) (2\alpha n)^{2 n} \frac{1}{|\log u -k|^{2n-\beta}} \\
& \leq & C_{\alpha,n} (2\alpha n)^{2 n} \sum_{k <k_{0}} \frac{1}{|\log u -k|^{2n-\beta}}\\
& \leq & C_{\alpha,n} (2\alpha n)^{2 n} \sum_{k <k_{0}} \frac{1}{|k-k_0|^{2n-\beta}} \leq C_{\alpha,n} (2\alpha n)^{2 n} \sum_{k=1}^{\infty} \frac{1}{k^{2n-\beta}}.
\end{eqnarray*}
The above sum is finite if $\beta < 2n-1.$ Similarly, for $S_3^{'},$ we obtain 
\begin{eqnarray*}
S_3^{'}
& \leq & C_{\alpha,n} (2\alpha n)^{2 n} \sum_{k >k_{0}+1} \frac{1}{|\log u -k|^{2n-\beta}}\\
& \leq & C_{\alpha,n} (2\alpha n)^{2 n} \sum_{k >k_{0}+1} \frac{1}{|k-(k_0+1)|^{2n-\beta}}
 \leq C_{\alpha,n} (2\alpha n)^{2 n} \sum_{k=1}^{\infty} \frac{1}{k^{2n-\beta}}.
\end{eqnarray*}
This gives $S_3^{'} < \infty$ for $\beta < 2n-1.$ Finally $S_2^{'}$ is estimated as 
\begin{eqnarray*}
S_2^{'} &=& C_{\alpha,n} \left( \frac{\sin(\frac{|\log u -k_0|}{2n \alpha})}{\frac{|\log u -k_0|}{2n \alpha}} \right)^{2n} |\log u - k_0|^{\beta} + C_{\alpha,n} \left( \frac{\sin(\frac{|\log u -k_0-1|}{2n \alpha})}{\frac{|\log u -k_0-1|}{2n \alpha}} \right)^{2n} |\log u - (k_0-1)|^{\beta} \\
& \leq & C_{\alpha,n} \left( \frac{\sin(\frac{|\log u -k_0|}{2n \alpha})}{\frac{|\log u -k_0|}{2n \alpha}} \right)^{2n} + C_{\alpha,n} \left( \frac{\sin(\frac{|\log u -k_0-1|}{2n \alpha})}{\frac{|\log u -k_0-1|}{2n \alpha}} \right)^{2n} \\
&\leq & 2 C_{\alpha,n} \sup_{0 \leq u \leq 1} \left( \frac{\sin(\frac{u}{2n \alpha})}{\frac{u}{2n \alpha}} \right)^{2n} 
\leq  2 C_{\alpha,n}.
\end{eqnarray*}
Combining the estimates $S_1^{'},S_2^{'},S_3^{'},$ we get 
$$\sup_{u \in \mathbb{R}^+} \sum_{k=-\infty}^{\infty} |{ \overline{J_{\alpha,n}}}(e^{-k}u)| \ |k- \log u|^{\beta} < \infty$$ and hence $(\chi_{3})$ is verified. 

Putting $c=0$ in the Mellin-Jackson kernel $ \overline{J_{\alpha,n}}$ and taking the Mellin derivative of the kernel, we get 

$$(\theta  \overline{J_{\alpha,n}})(x)= \frac{ C_{\alpha,n}}{\log^{2n}x} \left[\frac{2 n}{\rho^{2n-1}} \sin^{2n-1} (\rho \log x ) \left( \cos(\rho \log x)-\frac{\sin(\rho \log x)}{\rho \log x} \right)\right].$$ 
Proceeding along the lines of the estimate of $S_1^{'},S_2^{'},S_3^{'}$, it follows that $M_{1}(\theta \overline{J_{\alpha, n}}) < \infty,$ and thus 
$\overline{J_{\alpha,n}}$ satisfies the assumptions of Theorem \ref{xyz5} for $n>1$ and $\beta<2n-1$.   

\section{Acknowledgement}
\noindent 
S. Bajpeyi gratefully thank Indian Institute of Science Education and Research (IISER) Thiruvananthapuram for the postdoctoral fellowship to carry out this research work. A. Sathish Kumar acknowledges DST-SERB, India Research Grant MTR/2021/000428 for the financial support and NFIG Grant, IIT Madras, Grant No. RF/22-23/0984/MA/NFIG/009017.

\end{document}